\newtheorem{thm}{Theorem}[section]
\newtheorem*{thm*}{Theorem}
\newtheorem{cor}[thm]{Corollary}
\newtheorem{lem}[thm]{Lemma}
\newtheorem{prop}[thm]{Proposition}
\theoremstyle{definition}
\newtheorem{dfn}[thm]{Definition}
\newtheorem*{dfn*}{Definition}
\newtheorem{rem}[thm]{Remark}
\newtheorem*{conj*}{Conjecture}
\newtheorem{ex}[thm]{Example}
\newtheorem*{ex*}{Example 4.11}
\newtheorem{nota}[thm]{Notation}
\theoremstyle{remark}
\newtheorem*{conv*}{Convention}
\newtheorem*{claim*}{Claim}
\numberwithin{equation}{thm}
\def\mapstofill@{%
   \arrowfill@{\mapstochar\relbar}\relbar\rightarrow}
\newcommand*\xmapsto[2][]{%
   \ext@arrow 0395\mapstofill@{#1}{#2}}
\def\cE{\mathcal{E}}
\def\cO{\mathcal{O}}
\def\cS{\mathcal{S}}
\def\cT{\mathcal{T}}
\def\cX{\mathcal{X}}
\def\cY{\mathcal{Y}}
\def\cZ{\mathcal{Z}}
\def\fm{\mathfrak{m}}
\def\fn{\mathfrak{n}}
\def\fp{\mathfrak{p}}
\def\fq{\mathfrak{q}}
\def\rG{\mathrm{G}}
\def\rH{\mathrm{H}}
\def\rK{\mathrm{K}}
\def\rV{\mathrm{V}}
\def\LL{\mathbb{L}}
\def\RR{\mathbb{R}}
\def\ZZ{\mathbb{Z}}
\def\chara{\operatorname{char}}
\def\coh{\operatorname{\mathsf{coh}}}
\def\db{\operatorname{\mathsf{D^b}}}
\def\dbm{\operatorname{\mathsf{D^b_{fl}}}}
\def\dpf{\operatorname{\mathsf{D^{perf}}}}
\def\dpfm{\operatorname{\mathsf{D^{perf}_{fl}}}}
\def\e{\mathrm{e}}
\def\edim{\operatorname{edim}}
\def\Ext{\mathrm{Ext}}
\def\h{\mathbf{h}}
\def\height{\operatorname{\mathsf{ht}}}
\def\Hom{\operatorname{\mathsf{Hom}}}
\def\l{\ell}
\def\loc{\mathrm{loc}}
\def\ltensor{\otimes^{\mathbb{L}}}
\def\Min{\operatorname{\mathsf{Min}}}
\def\mod{\operatorname{\mathsf{mod}}}
\def\pol{\mathrm{pol}}
\def\Spec{\operatorname{\mathsf{Spec}}}
\def\thick{\operatorname{\mathsf{thick}}}
\def\ul{\underline}
\begin{document}
\title{On the categorical entropy of the Frobenius pushforward functor}
\author{Hiroki Matsui}
\address[H. Matsui]{Department of Mathematical Sciences, Faculty of Science and Technology, Tokushima University, 2-1 Minamijyousanjima-cho, Tokushima 770-8506, Japan}
\email{hmatsui@tokushima-u.ac.jp}

\author{Ryo Takahashi}
\address[R. Takahashi]{Graduate School of Mathematics, Nagoya University, Furocho, Chikusaku, Nagoya 464-8602, Japan}
\email{takahashi@math.nagoya-u.ac.jp}
\urladdr{http://www.math.nagoya-u.ac.jp/~takahashi/}

\thanks{2020 {\em Mathematics Subject Classification.} 13A35, 13D09, 14F08}
\thanks{{\em Key words and phrases.} derived category, categorical entropy, local entropy, Frobenius pushforward}
\thanks{The first author was partly supported by JSPS Grant-in-Aid for Early-Career Scientists 22K13894. The second author was partly supported by JSPS Grant-in-Aid for Scientific Research 19K03443.}

\begin{abstract}
In this paper, we consider the Frobenius pushforward endofunctor $F_\ast$ of the bounded derived category of finitely generated modules over an $F$-finite noetherian local ring.
We completely determine the categorical entropy of $F_\ast$ in the sense of Dimitrov, Haiden, Katzarkov, and Kontsevich.
\end{abstract}
\maketitle
%%%%%%%%%%%%%%%%%%%%%%%%%%%%%%%%%%%%%%%%%%%%%%%%%%%%%%%%%%%%%%%%%
\section{Introduction}

For a {\it categorical dynamical system}, namely, a pair $(\cT, \Phi)$ of a triangulated category $\cT$ and an exact endofunctor $\Phi:\cT \to \cT$, Dimitrov, Haiden, Katzarkov, and Kontsevich \cite{DHKK} have introduced an invariant $\h_t^{\cT}(\Phi)$ called the {\it categorical entropy} of $\Phi$, which is a categorical analogue of the topological entropy.
The categorical entropy $\h_t^{\cT}(\Phi)$ is a function in one real variable with values in $\RR \cup \{-\infty\}$ and measures the complexity of the exact endofunctor $\Phi$.

In this paper, we consider the Frobenius endomorphism $F:R \to R$ of a commutative noetherian local ring $R$ with prime characteristic $p>0$, assuming that $R$ is {\em $F$-finite}, that is to say, the map $F$ is (module-)finite. 
The Frobenius endomorphism $F$ induces two exact endofunctors.
One is called the {\it Frobenius pushforward} $F_*$ on the bounded derived category $\db(R)$ of finitely generated $R$-modules and the other is called the {\it Frobenius pullback} $\LL F^*$ on the derived category $\dpf(R)$ of perfect $R$-complexes. 
As to the latter, Majidi-Zolbanin and Miasnikov \cite{MZM} considered the full subcategory $\dpfm(R)$ of $\dpf(R)$ consisting of perfect complexes with finite length cohomologies, and computed the categorical entropy $\h_t^{\dpfm(R)}(\LL F^*)$.

The aim of this paper is to study the Frobenius pushforward $F_*$ on $\db(R)$ and compute its categorical entropy.
The main result of this paper is the following theorem.

\begin{thm}[Corollary \ref{3}]\label{main}
Let $(R,\fm,k)$ be a $d$-dimensional $F$-finite noetherian local ring with prime characteristic $p$.
Then there is an equality 
$$
\h_t^{\db(R)}(F_*) = d\log p + \log[F_*(k):k].
$$
\end{thm}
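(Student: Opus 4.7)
My plan is to establish matching lower and upper bounds on $\h_t^{\db(R)}(F_*)$ via the complexity characterization of categorical entropy. Fix a classical generator $G$ of $\db(R)$ containing $R$ as a direct summand (any classical generator can be enlarged to have this form); then $\h_t(F_*)=\lim_n\frac{1}{n}\log\delta_t(G,F_*^nG)$.

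\textbf{Lower bound via Hilbert--Kunz.} The key input is the length formula $\ell_R(F_*^nM)=\ell_R(M)\cdot[F_*k:k]^n$ for every finite-length module $M$, which holds because each composition factor $k$ of $M$ becomes a module of $R$-length $[k:k^{p^n}]=[F_*k:k]^n$ under $F_*^n$. Applied to $M=R/\fm^{[p^n]}$ and combined with the isomorphism $F_*^nR\otimes_R k\cong F_*^n(R/\fm^{[p^n]})$, this yields
$$\mu_R(F_*^nR)\;=\;\ell_R\bigl(R/\fm^{[p^n]}\bigr)\cdot[F_*k:k]^n\;\sim\;\e_{\mathrm{HK}}(R)\cdot p^{nd}\cdot[F_*k:k]^n$$
by definition of the Hilbert--Kunz multiplicity $\e_{\mathrm{HK}}(R)$. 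Since $R$ is a summand of $G$, the standard inequality $\delta_t(G,M)\geq\mu_R(M)$ for any module $M$ gives, after taking $\frac{1}{n}\log$ and passing to the limit, the bound $\h_t(F_*)\geq d\log p+\log[F_*k:k]$.

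\textbf{Upper bound.} Here I seek, for each $n$, a finite expression of $F_*^nG$ as an iterated cone of $G$-shifts with total complexity at most $p^{nd}[F_*k:k]^n\cdot\mathrm{poly}(n)$. For the $k$-summand of $G$ this is immediate: $F_*^nk$ is a sum of $[F_*k:k]^n$ copies of $k$. For the $R$-summand, a natural approach is to choose a regular local ring $S$ with $R=S/I$ (after completion) of the same dimension, note that $F_*^nS$ is free over $S$ of rank $p^{nd}[F_*k:k]^n$, and then transport this bound down to $R$ via the restriction/induction machinery, absorbing the overhead into a polynomial factor in $n$. Since $F_*^nR$ is concentrated in cohomological degree zero, the answer should be $t$-independent, matching the lower bound and forcing the equality.

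\textbf{Principal obstacle.} The main difficulty is the upper bound when $R$ is not regular: in that case $F_*^nR$ has infinite projective dimension, ruling out a direct free-resolution argument in $\db(R)$. One must instead proceed via a suitable devissage---either using a regular presentation $R=S/I$ or using an iterated filtration by $\fm$-powers whose length is controlled by a uniform Castelnuovo--Mumford-type bound---and ensure that the overhead grows only polynomially in $n$, so that the exponential rate is dictated exclusively by the leading term $p^{nd}[F_*k:k]^n$. The balance between the geometric factor $p^{nd}$ and the arithmetic factor $[F_*k:k]^n$ coming from the residue field extension is the algebraic heart of the theorem, and emerges cleanly from the length formula for $F_*$ on finite-length modules.
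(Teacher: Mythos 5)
Your lower bound argument is essentially correct and closely parallels the paper's, up to bookkeeping. Both reduce to the observation that $\delta_t(G,\,{}^eG)$ is bounded below by (a constant times) $\mu_R({}^eR)=\l_R(R/\fm^{[p^e]})\cdot[F_*k:k]^e$, which grows like $p^{de}[F_*k:k]^e$. You phrase the rate as a Hilbert--Kunz multiplicity, while the paper uses the sandwich $\fm^{\nu p^e}\subseteq \fm^{[p^e]}\subseteq \fm^{p^e}$ and Hilbert--Samuel multiplicity; these are interchangeable for this asymptotic. One caution: the ``standard inequality'' $\delta_t(G,M)\geq\mu_R(M)$ is not literally true---what holds is $\delta_t(G,M)\geq c\,\mu_R(M)$ for a constant $c=c(G,t)>0$, obtained by tensoring with the Koszul complex on a generating set of $\fm$ and bounding the length of degree-$0$ cohomology (this is exactly the content of the paper's Lemma \ref{lem1}). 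The constant is harmless for entropy, but your write-up should supply it.

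The upper bound, however, is a genuine gap, and both devices you float would fail. First, there is no regular presentation $R=S/I$ with $\dim S=d$ unless $R$ is already regular: a regular local ring is a domain, so any nonzero $I$ drops dimension, and in general $\dim S\geq\edim R>d$. Thus the free rank of $F_*^nS$ over $S$ is $p^{n\dim S}[F_*k:k]^n$, which overshoots the desired exponent; and in any case a bound in $\db(S)$ does not transport to a bound on $\delta_t^{\db(R)}(G,\,{}^eG)$, which must be computed entirely inside $\db(R)$. Second, an $\fm$-adic filtration of ${}^eR$ has length on the order of $p^e$ (since $\fm^{[p^e]}\subseteq\fm^{p^e}$ is the largest $\fm$-power killing ${}^eR/\fm{}^eR$ in a uniform sense), so the overhead is exponential, not polynomial.

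What the actual proof needs, and what your proposal is missing, is the following chain of ideas. First, a reduction to the case where $R$ is a domain, via a prime filtration of $R$ and subadditivity of $\delta_t$. Second, the Iyengar--Takahashi uniform annihilation theorem, producing a single nonzero $x\in R$ with $x\cdot\Ext_R^{2d+1}(M,N)=0$ for all finitely generated $M,N$. This makes ${}^eR$ a direct summand of $\Omega_R\bigl(\Omega_R^{2d}({}^eR)/x\,\Omega_R^{2d}({}^eR)\bigr)$ (Herzog--Popescu), so ${}^eR$ can be built from shifted copies of $R$ and of $R/xR$-modules, with multiplicities given by the Betti numbers $\beta_0^R({}^eR),\ldots,\beta_{2d}^R({}^eR)$ plus a term for ${}^eR/x\,{}^eR={}^e(R/x^{p^e}R)$, which lies in $({}^e(R/xR))^{*p^e}$. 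Third, an induction on dimension handles the $R/xR$ contributions, since $\dim R/xR=d-1$. Fourth, and crucially, Seibert's theorem supplies the asymptotic bound $\beta_i^R({}^eR)=O\bigl(([F_*k:k]p^d)^e\bigr)$; this is precisely the ingredient that makes the Krull dimension $d$, rather than the embedding dimension, govern the exponential rate. Without all of (a)--(d) the upper bound does not close, and your proposal stops exactly at the point where the real work begins.
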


For an arbitrary finite local endomorphism $\phi:R \to R$ of an arbitrary noetherian local ring $R$, one can take its pushforward $\phi_*:\db(R) \to \db(R)$.
In such a general setting, one can still obtain the following weaker result on the categorical entropy.

\begin{thm}[Theorem \ref{lbd}]\label{main2}
Let $(R,\fm,k)$ be a $d$-dimensional noetherian local ring and $\phi:R \to R$ a local ring endomorphism of finite length. 
Then there is an inequality 
$$
\h_t^{\db(R)}(\phi_*) \ge\h_{\loc}(\phi) + \log [\phi_*(k):k].
$$
\end{thm}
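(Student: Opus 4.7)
The plan is to bound the complexity $\delta_t(R, \phi^n_* R)$ from below by a subadditive functional, compute that functional explicitly, and transfer the bound to the categorical entropy by comparing with a split-generator of $\db(R)$ that contains $R$ as a summand.

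Define $\mu : \db(R) \to \ZZ_{\ge 0}$ by $\mu(E) := \dim_k H^0(E \ltensor_R k)$. The long exact sequence of Tor shows that $\mu$ is subadditive on exact triangles, and $\mu(R[n]) = 1$ if $n=0$ and $0$ otherwise, because $R \ltensor_R k = k$ is concentrated in degree $0$. Consequently, for every iterated-cone filtration $0 = X_0 \to X_1 \to \cdots \to X_r$ with $\mathrm{cone}(X_{i-1}\to X_i) \cong R[m_i]$ and $X_r$ having $\phi^n_* R$ as a direct summand, subadditivity yields $\mu(\phi^n_* R) \le \#\{i : m_i = 0\} \le \sum_i e^{m_i t}$, so $\delta_t(R, \phi^n_* R) \ge \mu(\phi^n_* R)$ for every $t \in \RR$.

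The essential computation is $\mu(\phi^n_* R) = e_n\,\delta^n$, where $e_n := \ell_R(R/\phi^n(\fm)R)$ and $\delta := [\phi_*(k):k]$. As abelian groups $\phi^n_* R \otimes_R k = R/\phi^n(\fm)R$, but the induced $k$-vector-space structure is twisted through the reduction $\bar\phi^n : k \to k$; after replacing $R$ by its completion (which preserves the lengths and degrees in play) and using Cohen's structure theorem to choose a coefficient field, restriction of scalars from $k$ to the subfield $\bar\phi^n(k)\subseteq k$ multiplies the natural $k$-dimension $e_n$ by $[k:\bar\phi^n(k)] = \delta^n$. Dividing by $n$ and passing to the limit yields $\lim_n \tfrac{1}{n}\log\delta_t(R, \phi^n_* R) \ge \h_{\loc}(\phi) + \log\delta$.

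Finally, let $G_0$ be any split-generator of $\db(R)$ and set $G := R \oplus G_0$, which again split-generates $\db(R)$ and now has $R$ as a direct summand. The claim is $\delta_t(G, \phi^n_* G) \ge \delta_t(R, \phi^n_* R)$, obtained by projecting any filtration of $\phi^n_* G$ by shifts of $G = R \oplus G_0$ onto its $R$-part to extract a filtration of $\phi^n_* R$ by shifts of $R$ with the same multiset of shifts. Combined with the previous paragraph this gives $\h_t^{\db(R)}(\phi_*) \ge \h_{\loc}(\phi) + \log [\phi_*(k):k]$. I expect the main obstacle to be making this last projection rigorous: the extension classes in $\Hom(G[m_i], X_{i-1}[1]) = \Hom(R[m_i], X_{i-1}[1]) \oplus \Hom(G_0[m_i], X_{i-1}[1])$ can couple the two summands via off-diagonal components, so the projection argument needs a careful treatment---plausibly via Krull--Schmidt after completion, or by replacing $\mu$ with a functional directly adapted to $G$ that still detects $\mu(\phi^n_* R)$.
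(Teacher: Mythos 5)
The idea behind your first two paragraphs is sound and is in fact the same mechanism the paper uses: take a subadditive length-type functional $\mu$ that assigns weight $1$ to $R$ (in degree $0$) and $0$ to all shifts $R[n]$ with $n\ne 0$, and observe that $\mu(\phi_*^n R)=\ell_R(R/\phi^n(\fm)R)\cdot[\phi_*(k):k]^n$. (Your justification via Cohen's theorem and coefficient fields is unnecessarily heavy; the cleaner route is $\phi_*^nR\otimes_Rk=\phi_*^n\bigl(R/\phi^n(\fm)R\bigr)$ followed by the elementary Lemma~\ref{len}.)

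The fatal problem is the last step. The inequality $\delta_t(G,\phi_*^nG)\ge\delta_t(R,\phi_*^nR)$ is not merely unproved; it is \emph{false} in exactly the cases of interest. Since $\thick R=\dpf(R)$, one has $\delta_t(R,\phi_*^nR)<\infty$ if and only if $\phi_*^nR$ is a perfect complex. For $\phi=F$ the Frobenius, $F_*^n R$ is perfect only when $R$ is regular (Kunz), so for any singular $R$ the right-hand side is $+\infty$, while the left-hand side is finite because $G$ split-generates $\db(R)$. The ``projection'' heuristic cannot rescue this: any filtration of $\phi_*^nR\oplus Y'$ by shifts of $R$ would exhibit $\phi_*^nR$ as a summand of a perfect complex, which is impossible. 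More fundamentally, your functional $\mu(E)=\dim_k\rH^0(E\ltensor_Rk)$ cannot be applied with probe $G$ either: $G\ltensor_Rk$ is unbounded below once $R$ is singular, so there is no constant $C_t$ with $\mu(G[m])\le C_te^{mt}$ for all $m\in\ZZ$, and the passage from $\sum_i\mu(G[m_i])$ to $\sum_ie^{m_it}$ breaks down.

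The paper's Lemma~\ref{lem1} fixes precisely this point by replacing $k$ with the Koszul complex $\rK(\underline{x})$ on a system of generators of $\fm$. Because $\rK(\underline{x})$ is perfect, $G\ltensor_R\rK(\underline{x})$ is a \emph{bounded} complex with finite-length cohomologies, so the subadditive functional $E\mapsto\ell_R(\rH^m(E\ltensor_R\rK(\underline{x})))$ is uniformly controlled on all shifts of $G$: it is bounded by a constant $B$ on a window $[-N,N]$ and vanishes outside it, which is exactly what one needs to get $\ell_R(\rH^0(\phi_*^eG\ltensor_R\rK(\underline{x})))\le D_t\,\delta_t(G,\phi_*^eG)$ for a constant $D_t$ depending only on $G$ and $t$. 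Combined with the normalization that $R$ is a direct summand of $\rH^0(G)$ and $\inf G=0$ (achievable since the entropy does not depend on the split generator), this bounds $\delta_t(G,\phi_*^eG)$ from below directly, avoiding any comparison with $\delta_t(R,\phi_*^eR)$. Your first two paragraphs would go through verbatim if you replaced $k$ by $\rK(\underline{x})$ in the definition of $\mu$ and applied the resulting functional with probe $G$ rather than $R$.
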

\noindent
Here, $\h_{\loc}(\phi)$ is the {\em local entropy} of $\phi$, which has been introduced by Majidi-Zolbanin, Miasnikov, and Szpiro \cite{MZMS}.

The organization of this paper is as follows.
Section 2 is devoted to giving basic definitions including that of the categorical entropy.
In Section 3,  as a consequence of Theorem \ref{main2}, we prove one inequality of the equality given in Theorem \ref{main}.
In Section 4, we prove the opposite inequality, so that the the proof of Theorem \ref{main} is completed.

\begin{conv*}
Throughout the present paper, we assume that all rings are commutative and noetherian, and all subcategories are strictly full.
\end{conv*}

%%%%%%%%%%%%%%%%%%%%%%%%%%%%%%%%%%%%%%%%%%%%%%%%%%%%%%%%%%%%%%%%%%%%%%%%%%%%%%%%%%%%%%%%%%%%%%%%%%%%%%%%%%%%%%%%%%%%%%%%%%%%%%%%%%%%%%%%%%%%%%%%%%%%%%%%%%%%%%%%%%%%%%%%%%%%%%%%%%%%%%%%%%%%%%%%%%%%%%%%%%%%%%%%%%%%%%%%%%%%%%%%%%%%%%%%%%%%%%%%%%%%%%%%%%%%%%%%%%%%%%%%%%%%%%%%%%%%%%%%%%%%%%%%%%%%%%%%
\section{Preliminaries}

In this section, we recall the notions of the {\it categorical entropy} of an exact endofunctor of a triangulated category and the {\it local entropy} of an endomorphism of local ring.
First of all, let us fix some notations.

\begin{nota}
\begin{enumerate}[\rm(1)]
\item	
For a ring $R$, denote by $\db(R)= \db(\mod R)$ the category of bounded complexes of finitely generated $R$-modules, and by $\dpf(R)$ the full subcategory of perfect complexes.
Here, a complex $X$ of $R$-modules is called {\it perfect} if there exists a bounded complex $P$ of finitely generated projective $R$-modules such that $X\cong P$ in $\db(R)$.
\item
Let $f: R \to S$ be a ring homomorphism.
\begin{enumerate}[\rm(a)]
\item
The {\it pullback functor} $f^*: \mod R \to \mod S$ is defined by $f^*(M) = M \otimes_R S$ for $M \in \mod R$.
The left derived functor of $f^*$ is an exact functor $\LL f^*: \dpf(R) \to \dpf(S)$.
\item
If $f$ is finite, then the {\it pushforward functor} $f_*: \mod S \to \mod R$ is defined by the abelian group $f_*(M) := M$ for $M \in \mod S$ together with the $R$-module structure via $f$.
The pushforward functor $f_*$ is exact, whence its right derived functor $f_* = \RR f_*: \db(\mod S) \to \db(\mod R)$ is defined by the degreewise application of $f_*$.
\end{enumerate}
\item
For a triangulated category $\cT$ and an object $X$ of $\cT$, we denote by $\thick X$ the smallest thick subcategory of $\cT$ that contains $X$.
\end{enumerate}
\end{nota}

\subsection{Categorical entropy}
We recall the definition and basic properties of the categorical entropy of an exact endofunctor of a triangulated category.
The following notation is useful.

\begin{nota}
Let $\cT$ be a triangulated category.
\begin{enumerate}[\rm(1)]
\item
For two full subcategories $\cX, \cY \subseteq \cT$, we denote by $\cX * \cY$ the full subcategory of $\cT$ consisting of objects $Z$ which fit into an exact triangle $X \to Z \to Y \to X[1]$ with $X \in \cX$ and $Y \in \cY$.
By the octahedral axiom, this symbol turns out to be associative: the equality $(\cX*\cY)*\cZ = \cX*(\cY*\cZ)$ holds for full subcategories $\cX,\cY,\cZ$ of $\cT$.
\item
For an integer $r \ge 2$ and full subcategories $\cE_1, \ldots, \cE_r \subseteq \cT$, we inductively define $\cE_1 *\cdots* \cE_r=(\cE_1* \cdots * \cE_{r-1}) * \cE_r$.
If $\cE_i = \{X_i\}$ for all $i$, then we write $X_1 * \cdots * X_r$ for $\cE_1 *\cdots* \cE_r$.
If $X_i=X$ for all $i$, then we put $X^{* r} = \overbrace{X * \cdots * X}^{r\text{ times}}$.
\end{enumerate}
\end{nota}

\begin{dfn}(\cite[Definition 2.1]{DHKK})
Let $X, Y$ be objects in $\cT$.
For a real number $t$, define the {\it complexity} $\delta_t^{\cT}(X,Y) \in \RR_{\ge 0} \cup\{\infty\}$ of $Y$ with respect to $X$ by
$$
\delta_t^{\cT}(X,Y) = \inf\left\{\sum_{i=1}^r e^{n_i t}\,\middle|\,\begin{matrix}Y \oplus Y' \in X[n_1] * \cdots*X[n_r]\\
\mbox{ for some $Y' \in \cT$ and $n_1, \ldots, n_r \in \ZZ$}\end{matrix}\right\}.
$$	
We will drop the superscript $\cT$ when there is no possibility of confusion.
By definition, $\delta_t(X, Y) <\infty$ if and only if $Y \in \thick X$.
\end{dfn}

We list several fundamental properties of the complexity.

\begin{lem}\label{compl}
Let $\cT$ be a triangulated category.
\begin{enumerate}[\rm(1)]
\item
For $X,Y,Z \in \cT$ with $Z \in \thick Y \subseteq \thick X$, one has $\delta_t(X, Z) \le \delta_t(X,Y)\delta_t(Y,Z)$.
\item
For $X,Y,Z \in \cT$, one has $\delta_t(X,Y) \le \delta_t(X, Y \oplus Z) \le \delta_t(X, Y) + \delta_t(X, Z)$.
\item
For $X,Y,Z \in \cT$, one has $\delta_t(X \oplus Y, Z) \le \delta_t(X, Z)$.
\item
For $X,Y \in \cT$, one has $\delta_t(X, Y[n]) = \delta_t(X, Y)e^{nt}$.
\item
For $X,Y, Y_1,\ldots, Y_r \in \cT$ with $Y \in Y_1 * \cdots * Y_r$, one has $\delta_t(X, Y) \le \sum_{i=1}^r \delta_t(X, Y_i)$.
\item
For an exact functor $\Phi: \cT \to \cT'$ and $X,Y \in \cT$, one has $\delta_t^{\cT'}(\Phi(X), \Phi(Y)) \le \delta_t^{\cT}(X, Y)$.
\end{enumerate}
\end{lem}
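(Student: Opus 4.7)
The plan is to prove all six parts by direct manipulation of the witnesses
$$
Y \oplus Y' \in X[n_1] * \cdots * X[n_r]
$$
appearing in the definition of $\delta_t(X,Y)$. The only recurring subtlety is bookkeeping the auxiliary summand $Y'$, since the definition only presents $Y$ up to a direct summand; everything else reduces to shifting, splicing, or direct-summing triangles.

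I would dispatch the routine parts first. For (4), shifting each entry of a witness by $n$ gives a bijection between witnesses for $\delta_t(X,Y)$ and witnesses for $\delta_t(X,Y[n])$, with the total cost scaled by $e^{nt}$. For (6), an exact functor commutes with shifts, sends triangles to triangles, and preserves direct sums, so $\Phi$ maps any witness for $\delta_t^{\cT}(X,Y)$ to a witness for $\delta_t^{\cT'}(\Phi(X),\Phi(Y))$ of identical cost. For (2), the left inequality is immediate after relabelling summands, while the right inequality comes from concatenating near-optimal filtrations of $Y$ and of $Z$ (padding with zero objects if needed) so that $Y \oplus Z$ lies in a witness of cost $\delta_t(X,Y) + \delta_t(X,Z)$. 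Part (3) follows by direct-summing each step of a witness for $\delta_t(X,Z)$ with an identity triangle on the appropriate shift of $Y$, turning each cone $X[n_i]$ into $(X \oplus Y)[n_i]$ at no extra cost. Part (5) is an immediate application of the associativity of $*$: splice a near-optimal filtration of each $Y_i$ into the given filtration $Y \in Y_1 * \cdots * Y_r$ and add the costs.

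The real content lies in the submultiplicativity (1). Fix $\varepsilon > 0$ and choose witnesses
$$
Y \oplus Y' \in X[n_1] * \cdots * X[n_r], \qquad Z \oplus Z' \in Y[m_1] * \cdots * Y[m_s]
$$
whose costs lie within $\varepsilon$ of $\delta_t(X,Y)$ and $\delta_t(Y,Z)$ respectively. By (4), for each $j$ we have a witness $Y[m_j] \oplus Y'[m_j] \in X[n_1+m_j] * \cdots * X[n_r+m_j]$ of cost $e^{m_j t}\sum_i e^{n_i t}$. Walking step by step through the $s$-step extension realizing $Z \oplus Z' \in Y[m_1] * \cdots * Y[m_s]$ and direct-summing each triangle with suitable identity triangles to absorb the stray $Y'[m_j]$'s, one exhibits some $Z'' \in \cT$ with $Z \oplus Z''$ in the concatenated filtration by shifts $X[n_i+m_j]$ of total cost $\bigl(\sum_i e^{n_i t}\bigr)\bigl(\sum_j e^{m_j t}\bigr)$; letting $\varepsilon \to 0$ yields the claim. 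The main obstacle is exactly this bookkeeping: at every splicing step, the direct summands introduced along the way must be absorbed into the auxiliary object without inflating the cost. Once that pattern is set up cleanly, the remaining parts are essentially formal.
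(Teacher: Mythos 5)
Your proposal is correct and follows essentially the same route as the paper: parts (1), (2), (6) are the standard arguments from DHKK, parts (4), (5) are the immediate shift/splicing observations, and for part (3) you direct-sum each triangle of a witness with split triangles on shifts of $Y$ so that the cones become $(X\oplus Y)[n_i]$ and the extra copies of $Y$ are absorbed into the auxiliary summand, exactly as the paper does. The bookkeeping point you flag (that the auxiliary summand must be carried through the splicing in (1) and (5) via the same padding trick) is the right subtlety, and your sketch handles it correctly.
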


\begin{proof}
Assertions (1), (2) and (6) are shown in \cite[Proposition 2.2]{DHKK}, while (4) and (5) are direct consequences of the definition.
Let us prove (3). 
We may assume $\delta_t(X,Z)<\infty$.
Take $Z' \in \cT$ and $n_1, \ldots n_r\in\ZZ$ such that $Z \oplus Z' \in X[n_1] * X[n_2] * \cdots * X[n_r]$.
We easily see that
$$
Z \oplus Z' \oplus Y[n_1+n_2+\cdots +n_r] \in (X \oplus Y)[n_1] * (X \oplus Y)[n_2] * \cdots * (X \oplus Y)[n_r]
$$ 
holds. Therefore, the inequality $\delta_t(X \oplus Y, Z) \le \delta_t(X, Z)$ follows.
\end{proof}

An object $G$ of a triangulated category $\cT$ is called a {\it split generator} if $\cT = \thick G$.
For an excellent scheme $X$, the derived category $\db(\coh X)$ of bounded complexes of coherent sheaves on $X$ has a split generator by \cite[Theorem 4.15]{ELS}; see also \cite[Theorem 1.1]{IT2}.

\begin{dfn}[{\cite[Definition 2.4]{DHKK}}]\label{1}
Let $\cT$ be a triangulated category with a split generator $G$.
Let $\Phi:\cT \to \cT$ be an exact endofunctor.
For a real number $t$, we put
$$
\h_t^{\cT}(\Phi)=
\begin{cases}
\displaystyle\lim_{n \to \infty}\dfrac{1}{n} \log \delta_t^{\cT}(G, \Phi^n(G)) &\text{if $\delta_t(G,\Phi^e(G))\neq0$ for all $e \gg 0$,}\\
-\infty&\text{otherwise,}
\end{cases}
$$
and call it the {\it categorical entropy} of $\Phi$.
It follows from \cite[Lemma 2.6]{DHKK} that $\h_t^{\cT}(\Phi)$ exists in $[-\infty, \infty)$ and is independent of the choice of a split generator $G$.
Omitting the superscript $\cT$, we may simply write $\h_t(\Phi)$ if there is no danger of confusion.
\end{dfn}

\begin{rem}\label{rem}
Let $\cT$, $G$ and $\Phi$ be as in Definition \ref{1}.
\begin{enumerate}[\rm(1)]
\item
Let $n$ be an integer such that $\delta_t(G, \Phi^n(G)) = 0$.
Then it follows by Lemma \ref{compl}(1)(6) that
$$
\delta_t(G, \Phi^{n+1}(G)) \le \delta_t(G, \Phi^n(G)) \delta_t(\Phi^n(G),\Phi^{n+1}(G)) \le \delta_t(G, \Phi^n(G)) \delta_t(G,\Phi(G)) = 0,
$$
and this shows $\delta_t(G, \Phi^{n'}(G)) = 0$ for all $n' \ge n$.
\item
Let $G'$ be another split generator. 
Then, by Lemma \ref{compl}(1)(6), for each $n\ge0$ one has
\begin{align*}
\delta_t(G, \Phi^n(G))
&\le \delta_t(G, G') \delta_t(G', \Phi^n(G')) \delta_t(\Phi^n(G'), \Phi^n(G))\\
&\le \delta_t(G, G') \delta_t(G', \Phi^n(G')) \delta_t(G', G).
\end{align*}
Therefore, if $\delta_t(G, \Phi^n(G)) \neq 0$ for all $n \ge 0$, then $\delta_t(G', \Phi^n(G')) \neq 0$ for all $n \ge 0$, and moreover, $\delta_t(G, G')\ne0$ and $\delta_t(G', G) \neq 0$. 
\end{enumerate}
\end{rem}

Let us recall several asymptotic notations.
\begin{nota}
For two sequences $\{a_n\}_{n=1}^\infty$	 and $\{b_n\}_{n=1}^\infty$ of real numbers, we write
\begin{itemize}
\item
$a_n = O(b_n)$ if there is a real number $C>0$ such that $a_n \le C b_n$ for all $n \gg 1$, 
\item
$a_n = \Omega(b_n)$ if there is a real number $C>0$ such that $a_n \ge C b_n$ for all $n \gg 1$, and
\item
$a_n = \Theta(b_n)$ if $a_n = O(b_n)$ and $a_n = \Omega(b_n)$.
\end{itemize}
\end{nota}

We present some elementary facts about the asymptotic notations introduced above.

\begin{lem}
Let $\{a_n\}_{n=1}^\infty$ be a sequence of positive real numbers such that $a_{m+n} \le a_m a_n$ for all $m, n \ge 1$.
Let $u$ be a positive real number.
Set $\alpha=\lim_{n \to \infty} \frac{\log a_n}{n} \in [-\infty, \infty)$ (this limit exists by Fekete's lemma).
Then the following statements hold.
\begin{enumerate}[\rm(1)]
\item
\begin{enumerate}[\rm(a)]
\item
If $a_n = O(u^n)$, then $\alpha \le \log u$.
\item
If $a_n = \Omega(u^n)$, then $\alpha \ge \log u$.
\item	
If $a_n = \Theta(u^n)$, then $\alpha= \log u$.
\end{enumerate}
\item
For any real number $\beta > \alpha$, one has $a_n = O(e^{n \beta})$.
\end{enumerate}
\end{lem}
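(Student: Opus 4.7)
The plan is to reduce everything to the subadditive sequence $b_n := \log a_n$. The hypothesis $a_{m+n} \le a_m a_n$ becomes $b_{m+n} \le b_m + b_n$, so Fekete's lemma identifies $\alpha = \lim_n b_n/n$ with $\inf_n b_n/n \in [-\infty, \infty)$. Once this is in hand, both parts are routine monotonicity arguments for $\log$; the only step needing a moment's care is handling the case $\alpha = -\infty$ in part (2).

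For (1)(a), I would take logarithms of the bound $a_n \le C u^n$ (valid for $n \gg 1$ with some constant $C > 0$), obtaining $\frac{\log a_n}{n} \le \log u + \frac{\log C}{n}$, and let $n \to \infty$ to conclude $\alpha \le \log u$. Part (1)(b) is entirely symmetric from $a_n \ge C u^n$, giving $\alpha \ge \log u$. Part (1)(c) is then the conjunction of (a) and (b).

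For (2), I would invoke $\alpha = \inf_n b_n/n$ together with $b_n/n \to \alpha$. Given any $\beta > \alpha$, both in the case $\alpha \in \RR$ (using $\epsilon := \beta - \alpha > 0$ in the definition of the limit) and in the case $\alpha = -\infty$ (using the definition of the infimum and the monotone convergence $b_n/n \to -\infty$), one locates an index $N$ with $b_n / n < \beta$ for all $n \ge N$. Exponentiating yields $a_n < e^{n\beta}$ for all $n \ge N$, which is precisely $a_n = O(e^{n\beta})$ with implicit constant $1$.
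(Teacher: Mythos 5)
Your proof is correct and follows essentially the same route as the paper's: take logarithms, pass to the subadditive sequence $b_n = \log a_n$, and compare $b_n/n$ with $\log u$ or $\beta$ in the limit. Two small remarks. First, the case split in part (2) between $\alpha \in \RR$ and $\alpha = -\infty$ is unnecessary: the single fact $\lim_{n\to\infty} b_n/n = \alpha < \beta$ already forces $b_n/n < \beta$ for all $n \gg 1$ regardless of whether the limit is finite, which is exactly how the paper argues. Second, the phrase ``monotone convergence $b_n/n \to -\infty$'' is imprecise, since $b_n/n$ need not be monotone for a subadditive sequence (Fekete gives convergence to the infimum, not monotonicity); fortunately nothing in your argument actually relies on monotonicity, so this is only a wording issue.
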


\begin{proof}
(1) Let us show assertion (a).
By definition, there is a real number $C > 0$ such that $a_n/u^n \le C$ for all $n \gg 1$.
It holds that
\begin{align*}
\textstyle0 = \limsup_{n \to \infty}(\tfrac{1}{n} \log C)
&\textstyle\ge \limsup_{n \to \infty}(\frac{1}{n} \log\frac{a_n}{u^n})\\
&\textstyle= \limsup_{n \to \infty}(\frac{\log a_n}{n} - \log u) 
\textstyle= \lim_{n \to \infty}\frac{\log a_n}{n} - \log u,
\end{align*} 
which implies that $\alpha=\lim_{n \to \infty}\frac{\log a_n}{n} \le \log u$.
Assertions (b) and (c) can be shown similarly.

(2) Since $\lim_{n \to \infty}\frac{\log a_n}{n}< \beta$, we have $\lim_{n \to \infty}\log\frac{\sqrt[n]{a_n}}{e^{\beta}} =\lim_{n \to \infty}(\frac{\log a_n}{n}-\beta)< 0$.
This means that 
$
\lim_{n \to \infty}\frac{\sqrt[n]{a_n}}{e^{\beta}} < 1.
$
Hence the inequality
$
\frac{\sqrt[n]{a_n}}{e^{\beta}} < 1
$
holds for all $n \gg 1$, and we get $a_n < e^{n\beta}$ for all $n \gg 1$.
Now the conclusion $a_n = O(e^{n\beta})$ follows.
\end{proof}

Using the above lemma, we can prove the following proposition, which connect the order of the complexity and the categorical entropy.

\begin{prop}\label{cor2}
Let $\cT$ be a triangulated category.
Let $G$ be a split generator of $\cT$.
Let $\Phi:\cT \to \cT$ be an exact functor.
Then the following statements hold true.
\begin{enumerate}[\rm(1)]
\item
\begin{enumerate}[\rm(a)]
\item	
If $\delta_t(G, \Phi^n (G)) =O(u^n)$, then $\h_t(\Phi) \le \log u$.	
\item
If $\delta_t(G, \Phi^n (G)) =\Omega(u^n)$, then $\h_t(\Phi) \ge \log u$.	
\item
If $\delta_t(G, \Phi^n (G)) = \Theta(u^n)$, then $\h_t(\Phi) = \log u$.
\end{enumerate}
\item
For any $\beta > \h_t(\Phi)$, one has the equality $\delta_t(G, \Phi^n(G)) = O(e^{n \beta})$.	
\end{enumerate}
\end{prop}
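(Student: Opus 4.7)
The plan is to reduce Proposition \ref{cor2} to the preceding lemma, applied to the sequence $a_n := \delta_t^\cT(G, \Phi^n(G))$. By Definition \ref{1}, when $a_n > 0$ for all $n$ the categorical entropy $\h_t(\Phi)$ is precisely $\lim_{n \to \infty}\frac{\log a_n}{n}$, so once I establish the submultiplicativity $a_{m+n} \le a_m a_n$ the statements of the proposition will drop out of the lemma.

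First I would dispose of the degenerate case. If $a_n = 0$ for some $n$, then Remark \ref{rem}(1) forces $a_{n'} = 0$ for all $n' \ge n$, and $\h_t(\Phi) = -\infty$ by Definition \ref{1}. Under this hypothesis, (1)(a) becomes $-\infty \le \log u$ and (2) becomes $0 = O(e^{n\beta})$, both trivial, while (1)(b) and (1)(c) cannot occur since $a_n = \Omega(u^n)$ with $u > 0$ is incompatible with $a_n$ eventually vanishing. I may therefore assume $a_n > 0$ for every $n \ge 0$.

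To prove submultiplicativity, I first record the auxiliary observation that for any exact functor $\Psi : \cT \to \cT$ one has $\Psi(\thick X) \subseteq \thick \Psi(X)$: the full subcategory $\{Y \in \cT \mid \Psi(Y) \in \thick \Psi(X)\}$ is thick and contains $X$. Applying this with $\Psi = \Phi^m$ and $X = G$ yields $\Phi^{m+n}(G) = \Phi^m(\Phi^n(G)) \in \thick \Phi^m(G)$, and since $\thick \Phi^m(G) \subseteq \cT = \thick G$ trivially, I may invoke Lemma \ref{compl}(1) on the triple $(G, \Phi^m(G), \Phi^{m+n}(G))$. Combined with Lemma \ref{compl}(6) applied to $\Phi^m$ on the pair $(G, \Phi^n(G))$, this gives
$$
a_{m+n} \le \delta_t(G, \Phi^m(G)) \cdot \delta_t(\Phi^m(G), \Phi^{m+n}(G)) \le a_m \cdot a_n.
$$

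With submultiplicativity in hand and $\h_t(\Phi)$ identified with the Fekete limit $\alpha = \lim_{n \to \infty}\frac{\log a_n}{n}$, parts (1)(a)--(c) and (2) follow verbatim from the corresponding parts of the preceding lemma. The only mildly non-routine point is the auxiliary inclusion $\Phi^m(\thick G) \subseteq \thick \Phi^m(G)$ needed to legitimately apply Lemma \ref{compl}(1); everything else is bookkeeping.
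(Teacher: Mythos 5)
Your proposal is correct and follows exactly the route the paper intends but leaves implicit: apply the preceding lemma to $a_n = \delta_t(G,\Phi^n(G))$. The submultiplicativity $a_{m+n}\le a_m a_n$, which you establish carefully via Lemma \ref{compl}(1)(6) and the observation $\Psi(\thick X)\subseteq\thick\Psi(X)$, is the same computation that already appears (in the special case $a_{n+1}\le a_n a_1$) in Remark \ref{rem}(1), and your handling of the degenerate case $\h_t(\Phi)=-\infty$ is accurate.
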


\begin{rem}
Let $\cT,G,\Phi$ be as in Proposition \ref{cor2}.
The {\it categorical polynomial entropy}
$$
\h_t^{\pol}(\Phi) := \underset{n \to \infty}{\limsup}\, \dfrac{\log \delta_t(G, \Phi^n(G)) - n h_t(\Phi)}{\log n}
$$
has been introduced by Fan, Fu and Ouchi \cite{FFO}.
If $\h_t^{\pol}(\Phi)$ is positive, then it is easy to see that $\delta_t(G, \Phi^n(G)) \neq O(e^{n \h_t(\Phi)})$.
Therefore, for each $\alpha\in\RR$, the equality $\delta_t(G, \Phi^n(G)) = O(e^{n \alpha})$ is stronger than the equality $\h_t(\Phi) = \alpha$ in general.
\end{rem}

\subsection{Local entropy}

Next we recall the notion of the local entropy of a local dynamical system introduced in \cite{MZMS}.
We start by basic notions from (local) commutative algebra. 

\begin{dfn}
A local homomorphism $\phi: (R,\fm,k) \to (S,\fn,l)$ of local rings is of {\it finite length} if the ideal $\phi(\fm)S$ of $S$ is $\fn$-primary.
If $\phi$ is finite, then it is of finite length.
\end{dfn}

Here are examples of local endomorphisms of finite length which we consider in this paper.

\begin{ex}\label{endo}
\begin{enumerate}[\rm(1)]
\item
If a local ring $R$ has prime characteristic $p>0$, then the {\it Frobenius endomorphism} $F: R \to R$ given by $x \mapsto x^p$ is of finite length.
\item
Let $\Gamma= \sum_{i=1}^r \ZZ_{\ge 0}\,a_i$ be a finitely generated additive monoid and $k$ a field.
Let $k[\![\Gamma]\!]$ denote the completion of the monoid ring $k[\Gamma]= k[t^{a_i} \mid i=1,\ldots,r]$ with respect to its homogeneous maximal ideal $(t^{a_i} \mid i=1,\ldots,r)$. 
For an integer $m\ge 1$, the morphism of monoids $\Gamma \to \Gamma$ given by $x \mapsto mx$ induces a local ring endomorphism $F_m: k[\![\Gamma]\!] \to k[\![\Gamma]\!]$ which is of finite length. 
\end{enumerate}
\end{ex}

For a local endomorphism $\phi$ of a local ring $R$, an integer $e \ge 1$, and an $R$-module $M$, we  write ${}^{\phi^{e}}\!M$ for $\phi^e_*(M)$.
When $\phi$ is understood from the context, we simply write ${}^e\!M$.

\begin{dfn}
Let $\phi$ be a local endomorphism of a local ring $R$. 
We say that $R$ is {\it $\phi$-finite} provided that ${}^\phi\!R$ is finitely generated as an $R$-module.
\end{dfn}

\begin{rem}\label{2}
\begin{enumerate}[\rm(1)]
\item
If a local ring $R$ is $\phi$-finite, then so is the residue field $k$, i.e., $[{}^\phi k: k] < \infty$.
\item
Let $R$ be a local ring of prime characteristic $p$.
If $R$ is $F$-finite, then $R$ is excellent.
The converse holds if the residue field $k$ is $F$-finite; see \cite[Theorem 2.5 and Corollary 2.6]{Kunz}.
\item	
Let $k$ be a field.
Let $\Gamma= \sum_{i=1}^r \ZZ_{\ge 0}\,a_i$ be a finitely generated additive monoid.
Then the local ring $k[\![\Gamma]\!]$ is $F_m$-finite for every positive integer $m$.
Indeed, ${}^{F_m}k[\![\Gamma]\!]$ is generated by the monomials $\{t^{i_1a_1} \cdots t^{i_ra_r} \mid 0 \le i_1, \ldots, i_r < m\}$ as an $k[\![\Gamma]\!]$-module.
\end{enumerate}	
\end{rem}

The following easy lemma is frequently used later.

\begin{lem}\label{len}
Let $\phi$ be a finite local endomorphism of a local ring $R$.
For an $R$-module $M$ of finite length and $e \ge 0$, the $R$-module ${}^e\!M$ has finite length with $\l_R({}^e\!M) = [{}^1k:k]^e \cdot\l_R(M)$.	
\end{lem}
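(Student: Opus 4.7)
The plan is to argue by induction on $\l_R(M)$, exploiting exactness of the pushforward $\phi^e_*$ to reduce the whole statement to the case $M = k$.

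For the base case $M = k$, I would note that since $\phi$ is a local endomorphism, $\phi^e(\fm) \subseteq \fm$, so the $R$-action on ${}^e\!k$ factors through the induced field endomorphism $\ol\phi^e : k \to k$. In particular $\fm$ annihilates ${}^e\!k$, hence ${}^e\!k$ is simply the abelian group $k$ viewed as a $k$-vector space via $\alpha \cdot x = \ol\phi^e(\alpha)x$, whose length equals its $k$-dimension, namely $[k : \ol\phi^e(k)]$. To derive the multiplicativity of this degree in $e$, I would exhibit the descending tower
$$
k \supseteq \ol\phi(k) \supseteq \ol\phi^2(k) \supseteq \cdots \supseteq \ol\phi^e(k),
$$
and observe that the isomorphism $\ol\phi^{i-1} : k \xrightarrow{\sim} \ol\phi^{i-1}(k)$ carries $\ol\phi(k)$ onto $\ol\phi^i(k)$, so each successive index equals $[k : \ol\phi(k)] = [{}^1\!k : k]$, giving $\l_R({}^e\!k) = [{}^1\!k : k]^e$ as required.

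For the inductive step, assuming the formula for modules of length strictly less than $n := \l_R(M) \ge 1$, pick a submodule $N \subsetneq M$ with $M/N \cong k$. Applying the exact functor $\phi^e_*$ yields a short exact sequence
$$
0 \to {}^e\!N \to {}^e\!M \to {}^e\!k \to 0.
$$
The outer terms have finite length by the inductive hypothesis and the base case, whence so does ${}^e\!M$, and additivity of length combined with $\l_R(N) = n-1$ gives
$\l_R({}^e\!M) = [{}^1\!k:k]^e(n-1) + [{}^1\!k:k]^e = [{}^1\!k:k]^e \cdot n$.

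No real obstacle is expected: the only substantive content lies in verifying multiplicativity of $[{}^1\!k:k]$ along the tower of iterated images of $\ol\phi$, and this is a straightforward field-theoretic bookkeeping exercise; the rest is a routine dévissage using exactness of the pushforward.
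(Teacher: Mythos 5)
Your proof is correct, and the inductive dévissage in your second half matches the paper's first step verbatim: pick a short exact sequence $0 \to N \to M \to k \to 0$, push forward, and use additivity of length. Where you genuinely diverge is in how you establish the multiplicativity $\l_R({}^e k) = [{}^1k:k]^e$. The paper does this module-theoretically: it notes that $\fm$ annihilates ${}^1k$, so ${}^1k \cong k^{\oplus [{}^1k:k]}$ as $R$-modules, and then applies ${}^{e-1}(-)$ and inducts on $e$ to get ${}^ek \cong k^{\oplus [{}^1k:k]^e}$. You instead identify $\l_R({}^ek)$ with the field degree $[k : \ol\phi^e(k)]$ and obtain multiplicativity from the tower $k \supseteq \ol\phi(k) \supseteq \cdots \supseteq \ol\phi^e(k)$, using that $\ol\phi^{i-1}$ is an isomorphism of $k$ onto $\ol\phi^{i-1}(k)$ carrying $\ol\phi(k)$ onto $\ol\phi^i(k)$. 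Both routes are short and self-contained; the paper's argument has the minor advantage of never leaving the module category and never invoking the injectivity of $\ol\phi$ explicitly (it is automatic but your argument makes real use of it), whereas your field-tower computation makes the arithmetic of the degree $[{}^1k:k]$ more transparent and in particular makes the identification $[{}^1k:k] = [k:\ol\phi(k)]$ explicit, which the paper leaves implicit.
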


\begin{proof}
First we prove $\l_R({}^e\!M) = [{}^ek:k] \cdot\l_R(M)$ by induction on $n:=\l_R(M)$.
The case $n = 1$ is clear as $M\cong k$.
Let $n>1$.
Then there is an exact sequence $0 \to N \to M \to k \to 0$ of $R$-modules.
Applying ${}^e(-)$ to this sequence, we get an exact sequence $0 \to {}^e\! N \to {}^e\!M \to {}^ek \to 0$.
By the induction hypothesis, $\l_R({}^e\! N) = [{}^ek:k] \cdot \l_R(N)$ and $\l_R({}^ek) = [{}^ek:k]$, which yield
$$
\l_R({}^e\!M) =\l_R({}^e\! N) + \l_R({}^ek) = [{}^ek:k] \cdot \l_R(N) + [{}^ek:k] =[{}^ek:k] (\l_R(N) + 1) = [{}^ek:k] \cdot \l_R(M).
$$

Next we prove $[{}^ek:k] = [{}^1k:k]^e$.
Since ${}^1k \cong k^{\oplus [{}^1k:k]}$, one has ${}^e k = {}^{e-1}({}^1k) \cong {}^{e-1}(k^{\oplus [{}^1k:k]}) \cong ({}^{e-1}k)^{\oplus [{}^1k:k]}$.
We thus get an isomorphism ${}^ek \cong k^{\oplus [{}^1k:k]^e}$ by induction on $e$.
\end{proof}

\begin{dfn}[{\cite[Definition 5]{MZMS}}]
By a {\it local algebraic dynamical system} $(R, \phi)$, we mean a pair of a local ring $R$ and a local endomorphism $\phi$ of $R$ which is of finite length.
A local algebraic dynamical system $(R, \phi)$ is called {\it finite} if $R$ is $\phi$-finite. 

For a local algebraic dynamical system $(R, \phi)$, the {\it local entropy} of $\phi$ is defined by
$$
\h_{\loc}(\phi) := \underset{e\to \infty}{\lim}\, \dfrac{\log \l_R(R/\phi^e(\fm)R)}{e}.
$$	
This limit exists and is nonnegative by \cite[Theorem 1]{MZMS}.
\end{dfn}

For a local ring $R$ we denote by $\e(R)$ and $\edim R$ the (Hilbert--Samuel) multiplicity and the embedding dimension of a local ring $R$, respectively.
Under a certain assumption, we can explicitly compute the local entropy.

\begin{lem}\label{locent}
Let $(R,\phi)$ be a local algebraic dynamical system.
Put $d=\dim R$ and $\nu=\edim R$.
Assume that there exists an integer $u\ge1$ such that $\fm^{\nu u^e} \subseteq \phi^e(\fm)R \subseteq \fm^{u^e}$ for all $e \gg 1$. 
Then $\l_R(R/\phi^e(\fm)R) = \Theta(u^{de})$.
In particular, there is an equality $\h_{\loc}(\phi) = d\log u$.
\end{lem}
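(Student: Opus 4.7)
The plan is to sandwich $\l_R(R/\phi^e(\fm)R)$ between $\l_R(R/\fm^{u^e})$ and $\l_R(R/\fm^{\nu u^e})$ using the hypothesis, and then invoke the Hilbert--Samuel polynomial to evaluate both bounding quantities.

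First, from the inclusions $\fm^{\nu u^e} \subseteq \phi^e(\fm)R \subseteq \fm^{u^e}$ one obtains a chain of surjections
$$
R/\fm^{\nu u^e} \twoheadrightarrow R/\phi^e(\fm)R \twoheadrightarrow R/\fm^{u^e},
$$
so that $\l_R(R/\fm^{u^e}) \le \l_R(R/\phi^e(\fm)R) \le \l_R(R/\fm^{\nu u^e})$ for all $e \gg 1$. Next, by Hilbert--Samuel, the function $n \mapsto \l_R(R/\fm^n)$ agrees for $n \gg 1$ with a polynomial of degree $d$ with positive leading coefficient $\e(R)/d!$, whence $\l_R(R/\fm^n) = \Theta(n^d)$. (In the degenerate case $d=0$, the ideal $\fm$ is nilpotent, so $\l_R(R/\fm^n)$ stabilizes to the positive constant $\l_R(R)$ and the same $\Theta$-estimate holds with exponent zero.)

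Applying the estimate to $n = u^e$ and to $n = \nu u^e$, and using that $\nu$ is a fixed constant independent of $e$, I would conclude
$$
\l_R(R/\fm^{u^e}) = \Theta(u^{de})
\quad\text{and}\quad
\l_R(R/\fm^{\nu u^e}) = \Theta(\nu^d u^{de}) = \Theta(u^{de}).
$$
Combining with the sandwich yields the desired $\l_R(R/\phi^e(\fm)R) = \Theta(u^{de})$.

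Finally, to extract the local entropy, choose constants $C_1, C_2 > 0$ with $C_1 u^{de} \le \l_R(R/\phi^e(\fm)R) \le C_2 u^{de}$ for $e \gg 1$; taking logarithms, dividing by $e$, and passing to the limit makes the contributions $(\log C_i)/e$ vanish, leaving $\h_{\loc}(\phi) = d \log u$. There is no real obstacle here: the whole proof is a direct application of Hilbert--Samuel theory to the hypothesized two-sided control of $\phi^e(\fm)R$ by powers of $\fm$. The only minor subtlety is verifying that the constant $\nu^d$ gets absorbed into the $\Theta$, and handling the $d = 0$ edge case, both of which are immediate.
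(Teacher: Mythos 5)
Your proof is correct and takes essentially the same route as the paper: sandwich $\l_R(R/\phi^e(\fm)R)$ between $\l_R(R/\fm^{u^e})$ and $\l_R(R/\fm^{\nu u^e})$, then use Hilbert--Samuel asymptotics to see both bounds are $\Theta(u^{de})$. The only cosmetic difference is that the paper extracts $\h_{\loc}(\phi)=d\log u$ by invoking Proposition~\ref{cor2}(1c), whereas you do the equivalent computation directly by taking logarithms and dividing by $e$.
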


\begin{proof}
By assumption, we have inequalities $\l_R(R/\fm^{u^e}) \le \l_R(R/\phi^e(\fm)R) \le \l_R(R/\fm^{\nu u^e})$ for all $e \gg 1$.
Since $\l_R(R/\fm^{u^e})/u^{de}$ and $\l_R(R/\fm^{\nu u^e})/u^{de}$ converge to the nonzero real numbers $\e(R)d!$ and $\e(R)\nu d!$ respectively, we obtain the equality $\l_R(R/\phi^e(\fm)R) = \Theta(u^{de})$.
The equality $\h_{\loc}(\phi) = d\log u$ follows from $\l_R(R/\phi^e(\fm)R) = \Theta(u^{de})$ and Proposition \ref{cor2}(1c).
\end{proof}

Since the endomorphisms in Example \ref{endo} satisfy the assumption in this lemma, we get:

\begin{cor}
\begin{enumerate}[\rm(1)]
\item
Let $R$ be a $d$-dimensional $F$-finite local ring of characteristic $p$.
Then the equalities $\l_R(R/F_*^e(\fm)R) = \Theta(p^{de})$ and $\h_{\loc}(F) = d\log p$ hold.
\item
Let $k$ be a field, $\Gamma$ a finitely generated additive monoid and $R=k[\![\Gamma]\!]$.
Let $m \ge 1$ be an integer.
Then one has the equalities $\l_R(R/(F_m)_*^e(\fm)R) = \Theta(m^{de})$ and $\h_{\loc}(F_m) = d\log m$.
\end{enumerate}	
\end{cor}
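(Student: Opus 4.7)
The plan is to apply Lemma \ref{locent} directly, taking $u=p$ in part (1) and $u=m$ in part (2). The only thing to verify is the two-sided containment
\[
\fm^{\nu u^e}\subseteq\phi^e(\fm)R\subseteq\fm^{u^e}
\]
for all $e\gg 1$; once this is in hand, the lemma yields both claimed equalities on the nose.

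For part (1), fix a minimal generating set $x_1,\ldots,x_\nu$ of $\fm$, so that $\nu=\edim R$. The characteristic $p$ collapse
\[
(r_1x_1+\cdots+r_\nu x_\nu)^{p^e}=r_1^{p^e}x_1^{p^e}+\cdots+r_\nu^{p^e}x_\nu^{p^e}
\]
identifies the image ideal $F^e(\fm)R$ with the Frobenius power $(x_1^{p^e},\ldots,x_\nu^{p^e})R$. The right containment $F^e(\fm)R\subseteq\fm^{p^e}$ is then immediate from $x_i^{p^e}\in\fm^{p^e}$. For the left containment $\fm^{\nu p^e}\subseteq F^e(\fm)R$, any monomial $x_1^{j_1}\cdots x_\nu^{j_\nu}$ with $j_1+\cdots+j_\nu=\nu p^e$ has some exponent $j_i\ge p^e$ by the pigeonhole principle, hence is divisible by $x_i^{p^e}$ and lies in $F^e(\fm)R$.

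Part (2) proceeds along the same lines. The minimal generators $b_1,\ldots,b_\nu$ of the monoid $\Gamma$ yield a minimal generating set $t^{b_1},\ldots,t^{b_\nu}$ of $\fm$, so $\nu=\edim R$, and the identity $F_m^e(t^a)=t^{m^e a}$ gives $F_m^e(\fm)R=(t^{m^e b_1},\ldots,t^{m^e b_\nu})R$. The same pigeonhole argument then furnishes both required inclusions with $u=m$. Neither step is genuinely difficult; the only mild subtlety is the characteristic $p$ identification of $F^e(\fm)R$ with the Frobenius power ideal in part (1), after which everything reduces to an elementary counting argument and Lemma \ref{locent} supplies both conclusions of the corollary.
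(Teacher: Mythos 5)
Your proof is correct and follows the paper's intended route: the paper simply observes that the two endomorphisms in Example 2.9 satisfy the hypotheses of Lemma 2.14 and cites that lemma, while you supply the explicit (and routine) verification that $F^e(\fm)R$ is the Frobenius power $(x_1^{p^e},\ldots,x_\nu^{p^e})$ in case (1) and $(t^{m^e b_1},\ldots,t^{m^e b_\nu})$ in case (2), from which the sandwich $\fm^{\nu u^e}\subseteq\phi^e(\fm)R\subseteq\fm^{u^e}$ follows by the pigeonhole count you describe. This matches the paper's argument in substance; you have merely made explicit the containment-check the authors leave implicit.
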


In this paper, we are mainly concerned with the Frobenius functor. 
Let $(R, \fm, k)$ be an $F$-finite local ring with characteristic $p$.
Then the Frobenius endomorphism $F:R \to R$ induces the {\it Frobenius pushforward}
$$
\begin{array}{cccc}
F_* = \RR F_* : &\db(R) & \xrightarrow{\qquad \qquad} & \db(R) \\
&\rotatebox{90}{$\subseteq$} & & \rotatebox{90}{$\subseteq$}\\
&\dbm(R) & \xmapsto{\qquad  \qquad} & \dbm(R),
\end{array}
$$ 
and the {\it Frobenius pullback} (see \cite[Proposition 1.10]{MZM})
$$
\begin{array}{cccc}
\LL F^*: &\dpf(R) & \xrightarrow{\qquad \qquad} & \dpf(R) \\
&\rotatebox{90}{$\subseteq$} & & \rotatebox{90}{$\subseteq$}\\
&\dpfm(R) & \xmapsto{\qquad  \qquad} & \dpfm(R).
\end{array}
$$
Here, $\dbm(R),\dpfm(R)$ stand for the subcategories of $\db(R),\dpf(R)$ consisting of complexes with finite length homologies, respectively.
The categorical entropy of the Frobenius pullback on $\dpfm(R)$ is computed by Majidi-Zolbanin and Miasnikov:

\begin{thm}[{\cite[Corollary 2.6]{MZM}}]
Let $R$ be a $d$-dimensional complete local ring of prime characteristic $p>0$.
Then the equality $\h_t^{\dpfm(R)}(\LL F^*) = d\log p$ holds.
\end{thm}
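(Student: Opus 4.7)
The plan is to sandwich the complexity $\delta_t(G, (\LL F^*)^n G)$ between two multiples of $p^{nd}$ for a well-chosen split generator $G$ of $\dpfm(R)$ and to conclude via Proposition \ref{cor2}. I would take $G := K(\ul{x}; R)$ to be the Koszul complex on a system of parameters $\ul{x} = x_1,\ldots,x_d$ of $R$. By Thomason's classification of thick subcategories of $\dpf(R)$, $\thick G = \dpfm(R)$, since the support of $G$ is $V(\ul{x}) = \{\fm\}$. Because $\LL F^*$ acts on Koszul complexes by raising the defining elements to the $p$-th power, one has $(\LL F^*)^n G \simeq K(x_1^{p^n},\ldots,x_d^{p^n}; R)$.

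For the upper bound $\h_t(\LL F^*) \le d\log p$, the key reduction is the exact triangle
$$
K(x;R) \longrightarrow K(x^m;R) \longrightarrow K(x^{m-1};R) \longrightarrow K(x;R)[1]
$$
in $\dpf(R)$, valid for every $x \in R$ and every $m \ge 2$. To verify this I would compute the mapping cone of the chain map $K(x;R) \to K(x^m;R)$ whose components are $1$ in degree $-1$ and $x^{m-1}$ in degree $0$; after cancelling a split acyclic summand this cone collapses to $K(x^{m-1}; R)$. Induction on $m$ gives $K(x^m;R) \in K(x;R)^{*m}$. Since $(\LL F^*)^n G = \bigotimes_{i=1}^d K(x_i^{p^n}; R)$ and tensoring with a perfect complex is an exact functor that preserves the relation $\cX \in \cY^{*r}$, iterating over $i=1,\ldots,d$ yields $(\LL F^*)^n G \in G^{*p^{nd}}$. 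Hence $\delta_t(G, (\LL F^*)^n G) \le p^{nd}$ for every $t \in \RR$, and Proposition \ref{cor2}(1a) yields the desired upper bound.

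For the lower bound $\h_t(\LL F^*) \ge d\log p$, I would introduce the weighted length function
$$
\chi_t(X) := \sum_{i \in \ZZ} \l_R(H^i X)\, e^{-it}
$$
on $\dpfm(R)$. The long exact cohomology sequence of a triangle shows $\chi_t$ is subadditive, while the identity $\chi_t(X[n]) = e^{nt} \chi_t(X)$ is immediate. Consequently, if $Y \oplus Y' \in X[n_1] * \cdots * X[n_r]$, then $\chi_t(Y) \le \chi_t(X) \sum_{i=1}^r e^{n_i t}$, giving $\delta_t(X, Y) \ge \chi_t(Y)/\chi_t(X)$ whenever $\chi_t(X)>0$. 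Applying this with $X=G$ and $Y=(\LL F^*)^n G$, the degree-zero cohomology alone contributes
$$
\chi_t\bigl((\LL F^*)^n G\bigr) \;\ge\; \l_R\bigl(R/(x_1^{p^n},\ldots,x_d^{p^n})\bigr) \;\ge\; \e(\ul{x}; R)\cdot p^{nd},
$$
using Lech's formula $\e(x_1^{p^n},\ldots,x_d^{p^n}; R) = p^{nd}\,\e(\ul{x}; R)$ together with $\l_R(R/I) \ge \e(I; R)$ for any parameter ideal $I$. Therefore $\delta_t(G, (\LL F^*)^n G) = \Omega(p^{nd})$, and Proposition \ref{cor2}(1b) gives the lower bound.

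Combining the two bounds yields $\h_t^{\dpfm(R)}(\LL F^*) = d\log p$. The most delicate step is the mapping-cone identification in the upper bound when $x$ is a zero divisor: the triangle must be established by direct computation on the cone complex, rather than by the cleaner argument available only under a regularity hypothesis.
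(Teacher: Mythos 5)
The paper does not prove this theorem: it is quoted as \cite[Corollary 2.6]{MZM} and used as a known result, so there is no proof in the source to compare against. Evaluated on its own, your argument is correct and complete. The choice $G=K(\ul{x};R)$ with $\ul{x}$ a system of parameters is indeed a split generator of $\dpfm(R)$ by the Hopkins--Neeman--Thomason classification, and $(\LL F^*)^nK(\ul{x};R)\cong K(\ul{x}^{p^n};R)$ is standard. Your triangle $K(x;R)\to K(x^m;R)\to K(x^{m-1};R)\to K(x;R)[1]$ is verified exactly as you say: the cone of the map $(1,\ \cdot x^{m-1})$ is the three-term complex $R\xrightarrow{(-x,1)}R^2\xrightarrow{(x^{m-1},\,x^m)}R$, and the unimodular change of basis $\left(\begin{smallmatrix}-x&1\\1&0\end{smallmatrix}\right)$ on the middle term splits off a contractible summand leaving $K(x^{m-1};R)$; no regularity of $x$ is used, which is the right thing to worry about since $R$ need not be Cohen--Macaulay. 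Since the triangle involves only shift $0$, iterating and tensoring gives $(\LL F^*)^nG\in G^{*p^{nd}}$, hence $\delta_t(G,(\LL F^*)^nG)\le p^{nd}$ uniformly in $t$. For the lower bound, your weighted length $\chi_t$ is subadditive on triangles with $\chi_t(X[n])=e^{nt}\chi_t(X)$, so $\delta_t(G,Y)\ge\chi_t(Y)/\chi_t(G)$; keeping only the degree-zero term and invoking $\l_R(R/\fq)\ge\e(\fq;R)$ for parameter ideals together with $\e(\ul{x}^{p^n};R)=p^{nd}\e(\ul{x};R)$ gives $\delta_t(G,(\LL F^*)^nG)=\Omega(p^{nd})$. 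Combining with Proposition \ref{cor2}(1) gives the two-sided bound and the result. Two small remarks: your argument nowhere uses completeness of $R$, so you are in effect proving the statement for arbitrary noetherian local rings of characteristic $p$; and your auxiliary function $\chi_t$ is in the same spirit as the length-of-Koszul-homology estimate the present paper uses in Lemma \ref{lem1}, so your proof is methodologically aligned with the surrounding text.
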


On the other hand, for the Frobenius pullback on $\dpf(R)$, the following holds.

\begin{prop}
Let $R$ be a local ring of prime characteristic $p>0$.
Then one has the equality $\delta_t^{\dpf(R)}(R,\LL F^*(R)) = 1$.
In particular, the equality $\h_t^{\dpf(R)}(\LL F^*) = 0$ holds.
\end{prop}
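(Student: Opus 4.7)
The plan is to first reduce the computation of $\delta_t^{\dpf(R)}(R,\LL F^*(R))$ to that of $\delta_t^{\dpf(R)}(R,R)$ by observing that $\LL F^*(R)\cong R$. Unwinding the definition, $F^*(R)=R\otimes_R R$ where the left factor carries the standard $R$-module structure and the right factor is twisted by Frobenius; the $R$-linear map $a\mapsto 1\otimes a$ has inverse $a\otimes b\mapsto a^p b$ (the relation $r\otimes b=1\otimes r^p b$ makes both composites the identity), so $F^*(R)\cong R$ as $R$-modules. Since $R$ is free, no derivation is necessary, and iterating gives $(\LL F^*)^n(R)\cong R$ for every $n\ge 0$. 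So everything reduces to showing $\delta_t^{\dpf(R)}(R,R)=1$.

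The upper bound $\delta_t^{\dpf(R)}(R,R)\le 1$ is immediate from the one-term ``decomposition'' $R\in R[0]$, which contributes $e^{0\cdot t}=1$ to the infimum. For the lower bound, my plan is to apply Lemma \ref{compl}(6) to the exact functor $-\otimes_R^{\LL}k\colon \dpf(R)\to \db(k)$, which sends $R$ to $k$. This yields $\delta_t^{\db(k)}(k,k)\le \delta_t^{\dpf(R)}(R,R)$, so it suffices to show $\delta_t^{\db(k)}(k,k)\ge 1$. Over the field $k$ every exact triangle splits, so any decomposition $k\oplus V\in k[n_1]*\cdots *k[n_r]$ exhibits $k\oplus V$ as a direct summand of $\bigoplus_i k[n_i]$. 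Comparing cohomology in degree zero then forces at least one index $i_0$ with $n_{i_0}=0$, and since every term is positive, $\sum_{i=1}^r e^{n_it}\ge e^{n_{i_0}t}=1$.

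Combining these bounds gives $\delta_t^{\dpf(R)}(R,\LL F^*(R))=1$. For the entropy statement, $R$ is a split generator of $\dpf(R)$ (finitely generated projectives over a local ring are free, so every perfect complex lies in $\thick R$), and by the iteration above $\delta_t^{\dpf(R)}(R,(\LL F^*)^n(R))=\delta_t^{\dpf(R)}(R,R)=1$ for all $n$. Hence Definition \ref{1} yields
\[
\h_t^{\dpf(R)}(\LL F^*)=\lim_{n\to\infty}\tfrac{1}{n}\log 1=0.
\]
The only mildly delicate point I anticipate is the degree-zero multiplicity argument that produces the lower bound $\delta_t^{\db(k)}(k,k)\ge 1$; everything else is formal, flowing from the isomorphism $\LL F^*(R)\cong R$ and the basic properties of complexity collected in Lemma \ref{compl}.
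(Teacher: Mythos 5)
Your proof is correct, and for the lower bound it takes a genuinely different route from the paper's. The paper stays inside $\dpf(R)$ and argues directly: since $\Hom_{\dpf(R)}(R,R[n])=\rH^n(R)=0$ for $n\neq 0$, any $Y\in R[n_1]*\cdots*R[n_r]$ with all $n_i\neq 0$ would have $\Hom_{\dpf(R)}(R,Y)=0$ by the long exact sequence for $\Hom(R,-)$, contradicting $R\oplus X\cong Y$; hence some $n_i=0$ and $\sum_i e^{n_i t}\ge 1$. You instead push forward along the exact functor $-\otimes_R^{\LL}k\colon\dpf(R)\to\db(k)$ via Lemma \ref{compl}(6), reducing to the residue field where the semisimplicity of $\db(k)$ lets you see the $n_i=0$ term by inspecting degree-zero cohomology. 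Both approaches are sound; the paper's is marginally shorter since it stays in one category, while yours illustrates the useful general principle of testing complexity lower bounds against an auxiliary exact functor to a simpler target, a technique much in the spirit of the Koszul-complex reduction the paper uses in Lemma \ref{lem1}. One small remark on your phrasing: over $k$, the containment $k\oplus V\in k[n_1]*\cdots*k[n_r]$ actually forces an isomorphism $k\oplus V\cong\bigoplus_i k[n_i]$, not merely a direct-summand relation, since every triangle in $\db(k)$ splits; either statement is enough for your conclusion. Your explicit description of the isomorphism $F^*(R)\cong R$ is also correct, though you could have invoked the general fact that $R\otimes_R S\cong S$ for any ring map $R\to S$ to shorten it.
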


\begin{proof}
Let us prove the first equality.
As $(\LL F^*)^e(R) \cong R$ for all $e \ge 0$, it suffices to show $\delta_t^{\dpf(R)}(R,R) = 1$.
The inequality $\delta_t^{\dpf(R)}(R,R) \le 1$ obviously holds.
Assume that there exist $n_1, \ldots, n_r\in\ZZ$ and $X \in \dpf(R)$ with $R \oplus X \in R[n_1] * \cdots * R[n_r]$.
Then at least one of the numbers $n_1, \ldots, n_r$ is zero.
Indeed, if $n_1, \ldots, n_r$ are all nonzero, then the equalities $\Hom_{\dpf(R)}(R, R[n_i]) = 0$ with $i=1,\ldots, r$ yield $\Hom_{\dpf(R)}(R, Y) = 0$ for any $Y \in R[n_1] * \cdots * R[n_r]$.
In particular, we have $\Hom_{\dpf(R)}(R, R \oplus X) = 0$, which leads a contradiction.
Thus $\sum_{i=1}^r e^{n_it} \ge 1$, and so $\delta_t^{\dpf(R)}(R,R) \ge 1$.
Now the first equality of the proposition follows.
As $R$ is a split generator of $\dpf(R)$, the second equality follows from the first.
\end{proof}

We can also compute the categorical entropy of the Frobenius pushforward on $\dbm(R)$.

\begin{prop}
Let $(R,\fm,k)$ be a $d$-dimensional $F$-finite local ring.
Then for every $e\ge1$ the equality $\delta_t^{\dbm(R)}(k, {}^ek) = [{}^1k:k]^e$ holds.
In particular, one has $\h_t^{\dbm(R)}(F_*) = \log [{}^1k:k]$.
\end{prop}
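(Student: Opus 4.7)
The plan is to reduce the first equality to computing $\delta_t^{\dbm(R)}(k,k^{\oplus n})$ where $n=[{}^1k:k]^e$. Following the proof of Lemma \ref{len}, $\fm$ annihilates ${}^1k$ because an element $x\in\fm$ acts as $x^p\in\fm$, which is zero on $k$; hence ${}^1k$ is a $k$-vector space of dimension $[{}^1k:k]$, giving ${}^1k\cong k^{\oplus[{}^1k:k]}$ as $R$-modules. Iterating (and using that $F_*$ commutes with direct sums) yields ${}^ek\cong k^{\oplus n}$ as $R$-modules. The upper bound is then immediate: $\delta_t(k,k)\le 1$ (take $r=1$, $n_1=0$ in the definition), and applying Lemma \ref{compl}(2) inductively gives $\delta_t(k,k^{\oplus n})\le n$.

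The lower bound is the heart of the argument, and I would establish it by introducing the \emph{weighted length}
$$
P_t(X):=\sum_{i\in\ZZ}\l_R(H^i(X))\,e^{-it},\qquad X\in\dbm(R),
$$
which takes values in $\RR_{\ge 0}$ and satisfies three key properties: (i) $P_t(k[j])=e^{jt}$, since $H^i(k[j])=k$ precisely when $i=-j$; (ii) $P_t$ is additive on direct sums; and (iii) $P_t$ is subadditive on exact triangles, meaning $P_t(B)\le P_t(A)+P_t(C)$ for every triangle $A\to B\to C\to A[1]$. Property (iii) is the only nontrivial one, and it follows from the cohomology long exact sequence, which yields the pointwise estimate $\l_R(H^i(B))\le\l_R(H^i(A))+\l_R(H^i(C))$; multiplying by the positive weight $e^{-it}$ and summing over $i$ preserves the inequality. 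Iterating (iii), any object $N\in k[n_1]*\cdots*k[n_r]$ satisfies $P_t(N)\le\sum_{i=1}^{r}e^{n_i t}$. Hence, whenever $k^{\oplus n}\oplus Y'\in k[n_1]*\cdots*k[n_r]$, we obtain
$$
n\;=\;P_t(k^{\oplus n})\;\le\;P_t(k^{\oplus n}\oplus Y')\;\le\;\sum_{i=1}^{r}e^{n_i t},
$$
so $\delta_t(k,k^{\oplus n})\ge n$, and the first equality follows.

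For the ``in particular'' statement, $k$ is a split generator of $\dbm(R)$ since every finite length $R$-module admits a composition series with factors $\cong k$. Consequently,
$$
\h_t^{\dbm(R)}(F_*)=\lim_{e\to\infty}\frac{1}{e}\log\delta_t(k,F_*^e(k))=\lim_{e\to\infty}\frac{1}{e}\log[{}^1k:k]^e=\log[{}^1k:k].
$$

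The main conceptual obstacle is discovering the right invariant $P_t$: one needs a single real-valued function on $\dbm(R)$ that is subadditive on triangles, additive on direct sums, and whose behavior under shifts matches the exponential weight $e^{nt}$ built into the definition of the complexity. Once $P_t$ is identified, the exponential factor $e^{-it}$ weighting cohomological degree precisely absorbs the shifts $k[n_i]$ into the bound $\sum e^{n_i t}$, and the rest of the proof is formal.
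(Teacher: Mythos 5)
Your proof is correct and takes essentially the same approach as the paper: the upper bound follows from the isomorphism ${}^ek\cong k^{\oplus[{}^1k:k]^e}$, and the lower bound from subadditivity of cohomological length along exact triangles. The only difference is that the paper works with $\l_R(\rH^0(-))$ alone and then separately observes $\sum_{i=1}^r e^{n_it}\ge\#\{i:n_i=0\}$, whereas your weighted invariant $P_t(X)=\sum_i\l_R(\rH^i(X))e^{-it}$ builds the shift weights into the length function and delivers the bound in one line --- a tidier packaging of the same argument, made possible because $k$ has cohomology concentrated in degree $0$.
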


\begin{proof}
As $k$ is a split generator of $\dbm(R)$, it is enough to show the first equality.
The inequality $\delta_t^{\dbm(R)}(k, {}^e k) \le [{}^1k:k]^e$ is trivial because ${}^e k \cong k^{\oplus [{}^1k:k]^e}$.
Take integers $n_1,\ldots, n_r$ and $X \in \dbm(R)$ such that $k^{\oplus [{}^1k:k]^e} \oplus X \in k[n_1] * \cdots * k[n_r]$.
Since $\l_R(\rH^0(-))$ is a subadditive function with respect to exact triangles, we obtain the (in)equalities
$$
\textstyle
[{}^1k:k]^e \le  \l_R(\rH^0(k^{\oplus [{}^1k:k]^e})) \le \l_R(\rH^0(k^{\oplus [{}^1k:k]^e} \oplus X)) \le \sum_{i=1}^r \l_R(\rH^0(k[n_i])) = \#\{i \mid n_i = 0\} .
$$
Hence $\sum_{i=1}^r e^{n_i t} \ge \sum_{n_i = 0} 1 \ge [{}^1k:k]^e$.
The inequality $\delta_t^{\dbm(R)}(k, {}^ek) \ge [{}^1k:k]^e$ follows.
\end{proof}

From these results, the remaining problem is to compute the categorical entropy of the Frobenius pushforward on $\db(R)$, which we shall deal with in the subsequent sections. 

%%%%%%%%%%%%%%%%%%%%%%%%%%%%%
\section{Lower bounds}

In this section, we give in terms of local entropies a lower bound on the categorical entropy $\h_t(\phi)$ of the pushforward $\phi_*: \db(R) \to \db(R)$ along a finite local endomorphism $\phi$ of a local ring $R$.
The following lemma plays a key role in showing the main theorem in this section, ideas of whose proof are taken from \cite[Lemma 2.1]{MZM}.
Denote by $\rK(-)$ the Koszul complex.

\begin{lem}\label{lem1}
Let $(R,\fm,k)$ be a local ring.
Let $\underline{x}$ be a sequence of elements of $R$ with $\sqrt{(\underline{x})} = \fm$.
Let $G \in \db(R)$.
Fix an integer $N$ such that $\rH^i(G \ltensor_R \rK(\underline{x})) = 0$ for all $|i| > N$, and set
$
B = \max \{ \l_R(\rH^i(G \ltensor_R \rK(\underline{x}))) \mid -N \le i \le N\}.
$ 
Then for any $E \in \db(R)$ and $m \in \ZZ$, one has
$$
\l_R(\rH^m(E\ltensor_R \rK(\underline{x}))) \le Be^{mt}e^{N|t|} \delta_t(G, E).
$$
\end{lem}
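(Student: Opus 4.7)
The plan is to exploit the definition of $\delta_t(G,E)$ directly. I would fix an arbitrary decomposition $E \oplus E' \in G[n_1]*\cdots*G[n_r]$ with $n_i \in \ZZ$, establish the inequality
$$
\l_R(\rH^m(E\ltensor_R \rK(\underline{x}))) \le Be^{mt}e^{N|t|}\sum_{i=1}^r e^{n_i t},
$$
and then take the infimum over all such decompositions.

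Applying the exact endofunctor $-\ltensor_R \rK(\underline{x})$ to the iterated exact triangles witnessing the $*$-decomposition gives
$$(E\ltensor_R \rK(\underline{x}))\oplus(E'\ltensor_R \rK(\underline{x})) \in (G[n_1]\ltensor_R \rK(\underline{x}))*\cdots*(G[n_r]\ltensor_R \rK(\underline{x})).$$
Since $\rH^m(-)$ is a homological functor, each triangle induces a long exact sequence in cohomology, so $\l_R(\rH^m(-))$ is subadditive over exact triangles. Iterating this along the $*$-decomposition, and using that $\rH^m$ of a direct summand has length at most that of the whole, I obtain
$$
\l_R(\rH^m(E\ltensor_R \rK(\underline{x}))) \le \sum_{i=1}^r \l_R(\rH^{m+n_i}(G\ltensor_R \rK(\underline{x}))).
$$

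It then remains to absorb the right-hand side into the desired bound. By hypothesis the $i$-th summand vanishes unless $|m+n_i|\le N$, in which case it is at most $B$. The main step is the observation that $|m+n_i|\le N$ implies $(m+n_i)t + N|t|\ge 0$, which one verifies by splitting into cases $t\ge 0$ and $t<0$: for $t\ge 0$ one has $(m+n_i+N)t\ge 0$ from $m+n_i\ge -N$, and for $t<0$ one has $(m+n_i-N)t\ge 0$ from $m+n_i\le N$. Hence on the non-vanishing range $B\le Be^{(m+n_i)t+N|t|}=Be^{mt}e^{N|t|}e^{n_it}$. Summing over $i$ and taking the infimum over decompositions yields the claim. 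The only slightly delicate point in the whole argument is this sign analysis, which is exactly what forces the asymmetric factor $e^{N|t|}$ rather than $e^{Nt}$.
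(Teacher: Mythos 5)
Your proof is correct and follows essentially the same route as the paper: apply $-\ltensor_R \rK(\underline{x})$ to the $*$-decomposition, use subadditivity of $\l_R(\rH^m(-))$ to bound the left side by $\sum_i \l_R(\rH^{m+n_i}(G\ltensor_R\rK(\underline{x})))$, and then absorb the count of indices with $|m+n_i|\le N$ into $e^{mt}e^{N|t|}\sum_i e^{n_it}$. The only cosmetic difference is that the paper justifies the key sign inequality via $e^{-|(m+n_i)t|}\ge e^{-N|t|}$ and $e^{-|x|}\le e^x$, whereas you verify $(m+n_i)t+N|t|\ge 0$ directly by cases on the sign of $t$; these are the same observation.
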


\begin{proof}
We can assume $E \in  \thick G$, because otherwise, the right-hand side is positive infinity.
We find $E' \in \db(R)$ and integers $n_1, \ldots, n_r$ such that
$
E \oplus E' \in G[n_1] * G[n_2] * \cdots * G[n_r].
$
Applying $- \ltensor_R \rK(\ul{x})$, we get a containment
$$
(E \ltensor_R \rK(\underline{x})) \oplus (E' \ltensor_R \rK(\underline{x})) \in (G\ltensor_R \rK(\underline{x}))[n_1] * (G\ltensor_R \rK(\underline{x})[n_2] * \cdots * (G\ltensor_R \rK(\underline{x})[n_r].
$$
Since $\l_R(\rH^i(-))$ is a subadditive function with respect to exact triangles, the inequalities
\begin{align*}
\l_R(\rH^m(E \ltensor_R \rK(\underline{x})))
&\le \l_R(\rH^m(E \ltensor_R \rK(\underline{x})) \oplus \rH^m(E' \ltensor_R \rK(\underline{x})))\\
&\textstyle\le  \sum_{i=1}^r \l_R(\rH^{m+n_i}(G \ltensor_R \rK(\underline{x}))) \le B |S_m|
\end{align*}
follow, where $S_m := \{i \mid -N \le m + n_i \le N\}$.
Using the inequality $e^x \ge e^{-|x|}$, we have
$$
\textstyle
e^{-N|t|} |S_m| \le \sum_{i \in S_m} e^{-|(m+n_i)t|} \le \sum_{i =1}^r e^{-|(m+n_i)t|} \le \sum_{i =1}^r e^{(m+n_i)t} = e^{mt} \sum_{i =1}^r e^{n_i t}.
$$
Thus $\l_R(\rH^m(E \ltensor_R \rK(\underline{x}))) \le B |S_m| \le Be^{mt}e^{N|t|}\sum_{i =1}^r e^{n_i t}$, and the assertion follows.
\end{proof}

The following theorem is the main result of this section.

\begin{thm}\label{lbd}
Let $(R,\fm,k)$ be a local ring with Krull dimension $d$ and embedding dimension $\nu$.
Let $(R,\phi)$ be a finite local algebraic dynamical system. 
Suppose that $\db(R)$ has a split generator (e.g., if $R$ is excellent).	
Then there is an inequality
$$
\h_t(\phi_*) \ge \h_{\loc}(\phi) + \log [{}^1k:k].
$$
If there exists an integer $u\ge1$ such that the inclusions $\fm^{\nu u^e} \subseteq \phi^e(\fm)R \subseteq \fm^{u^e}$ hold for all $e \gg 1$, then the following stronger equality holds for any split generator $G$ of $\db(R)$.
$$
\delta_t(G, {}^e G) = \Omega(([{}^1k:k]u^d)^e).
$$
\end{thm}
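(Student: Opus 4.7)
The plan is to reduce the desired lower bound on $\delta_t(G,{}^eG)$ to one on the mixed complexity $\delta_t(G,{}^eR)$, and then extract that bound from Lemma~\ref{lem1}. Fix a split generator $G$ of $\db(R)$ and a system of parameters $\ul{x}=x_1,\dots,x_d$ of $R$, so $\sqrt{(\ul{x})}=\fm$. Lemma~\ref{lem1} applied to this $G$ and $\ul{x}$ produces constants $N$ and $B$ such that, for every $E\in\db(R)$ and every $m\in\ZZ$,
\[
\l_R(\rH^m(E\ltensor_R\rK(\ul{x})))\le Be^{mt}e^{N|t|}\delta_t(G,E).
\]
We will feed this inequality with $E={}^eR=\phi_*^e(R)$ at $m=0$.

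Next we identify $\rH^0({}^eR\ltensor_R\rK(\ul{x}))$. Since $x_i$ acts on the $R$-module ${}^eR$ as multiplication by $\phi^e(x_i)$, we have ${}^eR\otimes_R\rK(\ul{x})\cong{}^e\rK(\phi^e(\ul{x}))$, and hence
\[
\rH^0({}^eR\ltensor_R\rK(\ul{x}))\cong{}^e(R/\phi^e(\ul{x})R).
\]
Because $\phi^e(\ul{x})R\subseteq\phi^e(\fm)R$, Lemma~\ref{len} combined with the resulting surjection $R/\phi^e(\ul{x})R\twoheadrightarrow R/\phi^e(\fm)R$ gives
\[
\l_R(\rH^0({}^eR\ltensor_R\rK(\ul{x})))=[{}^1k:k]^e\cdot\l_R(R/\phi^e(\ul{x})R)\ge[{}^1k:k]^e\cdot\l_R(R/\phi^e(\fm)R).
\]
Combined with the Lemma~\ref{lem1} estimate, this delivers
\[
\delta_t(G,{}^eR)\ge\frac{[{}^1k:k]^e\cdot\l_R(R/\phi^e(\fm)R)}{Be^{N|t|}}.
\]

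The last step is to pass from $\delta_t(G,{}^eR)$ to $\delta_t(G,{}^eG)$. Since $R\in\thick G$ and $\phi_*^e$ is exact, ${}^eR\in\thick({}^eG)$; thus Lemma~\ref{compl}(1) gives $\delta_t(G,{}^eR)\le\delta_t(G,{}^eG)\,\delta_t({}^eG,{}^eR)$, and Lemma~\ref{compl}(6) supplies $\delta_t({}^eG,{}^eR)\le\delta_t(G,R)$. Putting everything together produces
\[
\delta_t(G,{}^eG)\ge\frac{[{}^1k:k]^e\cdot\l_R(R/\phi^e(\fm)R)}{Be^{N|t|}\,\delta_t(G,R)}.
\]
Taking $\tfrac{1}{e}\log$ and sending $e\to\infty$ yields $\h_t(\phi_*)\ge\h_{\loc}(\phi)+\log[{}^1k:k]$. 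Under the additional hypothesis $\fm^{\nu u^e}\subseteq\phi^e(\fm)R\subseteq\fm^{u^e}$, the argument in the proof of Lemma~\ref{locent} gives $\l_R(R/\phi^e(\fm)R)=\Theta(u^{de})$, so the right-hand side of the last display is $\Omega(([{}^1k:k]u^d)^e)$, which is the stronger statement.

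The main delicate point is the transfer from $R$ to an arbitrary split generator $G$: the Koszul computation is straightforward on $R$, but $G$ is not known to belong to $\thick R$, so a direct lower bound on $\delta_t(R,{}^eR)$ cannot be turned into one on $\delta_t(G,{}^eG)$. The detour through the mixed complexity $\delta_t(G,{}^eR)$---finite because $R\in\thick G$---together with the functorial estimate of Lemma~\ref{compl}(6) applied to $\phi_*^e$, is precisely what allows the $R$-level Koszul computation to be exported to a general generator.
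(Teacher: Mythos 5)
Your proof is correct, and it follows the same underlying strategy as the paper's---Lemma~\ref{lem1} applied to a Koszul complex---but with a noticeably different implementation of the key step. The paper feeds $E={}^e G$ into Lemma~\ref{lem1}, which forces it to first normalize the split generator $G$ (via Remark~\ref{rem}(2)) so that $\inf G=0$ and $R$ is a direct summand of $\rH^0(G)$; only under those hypotheses can it identify $\rH^0({}^eG\otimes\rK(\ul{x}))$ with ${}^e\bigl(\rH^0(G)/\phi^e(\fm)\rH^0(G)\bigr)$ and then discard everything but the $R$-summand. You instead feed in $E={}^e R$, whose Koszul cohomology is transparent with no hypotheses on $G$, and then transfer to an arbitrary split generator via the chain
\[
\delta_t(G,{}^eR)\le\delta_t(G,{}^eG)\,\delta_t({}^eG,{}^eR)\le\delta_t(G,{}^eG)\,\delta_t(G,R),
\]
where the first inequality is Lemma~\ref{compl}(1) (valid since ${}^eR\in\thick({}^eG)\subseteq\thick G$, using exactness of $\phi_*^e$ and $R\in\thick G$) and the second is Lemma~\ref{compl}(6) applied to $\phi_*^e$. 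What this buys you is that the $\Omega$-bound holds verbatim for \emph{every} split generator $G$, with no normalization step and no separate appeal to Remark~\ref{rem}(2) at the end; the price is one extra composition estimate and the constant $\delta_t(G,R)$ in the denominator. One small point you should make explicit: that constant is finite because $R\in\thick G$, and strictly positive---for instance by running your own Koszul bound at $e=0$, which gives $0<\l_R(R/(\ul{x})R)\le Be^{N|t|}\delta_t(G,R)$---so dividing by it is legitimate. With that noted, the argument is complete.
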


\begin{proof}
We begin with showing the first assertion.
Let $G$ be a split generator of $\db(R)$.
We may assume $\inf G:=\inf\{i\mid\rH^i(G)\ne0\}= 0$ and $R$ is a direct summand of $\rH^0(G)$ by Remark \ref{rem}(2).
By Lemma \ref{lem1}, for any fixed $t$ there exists $D_t>0$ such that
$
\l_R(\rH^0({}^eG \otimes \rK(\underline{x}))) \le D_t\cdot\delta_t(G, {}^e G),
$
where $\ul{x}$ is a system of generators of $\fm$.
As $\inf G = 0$, we get equalities
$$
\rH^0({}^eG \otimes \rK(\underline{x})) = \rH^0({}^eG)/\fm \rH^0({}^eG) =  {}^e\rH^0(G)/\fm ({}^e\rH^0(G)) = {}^e (\rH^0(G)/\phi^e(\fm)\rH^0(G)).
$$
Here, the second equality follows since the Frobenius pushforward $F_*$ is exact on the category of $R$-modules.
From these observations, we obtain (in)equalities
\begin{align}\tag{$*$} \label{eq}
\begin{aligned}
\delta_t(G, {}^eG) 
&\ge D_t^{-1} \cdot \l_R({}^e\!\left[\rH^0(G)/\phi^e(\fm)\rH^0(G)\right]) 
= D_t^{-1} \cdot [{}^ek:k] \cdot \l_R(\rH^0(G)/\phi^e(\fm)\rH^0(G))\\
&= D_t^{-1} \cdot [{}^1k:k]^e \cdot \l_R(\rH^0(G)/\phi^e(\fm)\rH^0(G))
 \ge D_t^{-1} \cdot [{}^1k:k]^e \cdot \l_R(R/\phi^e(\fm)R),
\end{aligned}
\end{align}
where for the first equality we use Lemma \ref{len} and for the last inequality we use the assumption that $R$ is a direct summand of $\rH^0(G)$.
Take the logarithms of both sides of \eqref{eq}, divide them by $e$, and take the limits to get the inequality $\h_t(\phi_*) \ge \h_{\loc}(\phi) + \log [{}^1k:k]$.

Finally, we show the second assertion of the theorem.
Lemma \ref{locent} implies $\l_R(R/\phi^e(\fm)R) = \Theta(u^{de})$.
It follows from Remark \ref{rem}(2) and \eqref{eq} that $\delta_t(G,{}^eG) = \Omega(([{}^1k:k]u^d)^e)$.
\end{proof}

As a direct consequence of Theorem \ref{lbd}, we get the following corollary.

\begin{cor}\label{corlbd}
\begin{enumerate}[\rm(1)]
\item	
Let $(R,\fm,k)$ be a $d$-dimensional $F$-finite local ring with characteristic $p$.
Then the equality $
\delta_t(G, {}^e G) = \Omega(([{}^1k:k]p^d)^e)
$
holds for every split generator $G$ of $\db(R)$.
In particular, there is an inequality $\h_t(F_*) \ge d\log p + \log [{}^1k:k]$.
\item
Let $k$ be a field, $\Gamma$ a finitely generated additive monoid and $R=k[\![\Gamma]\!]$.
Let $m$ be a positive integer.
Then one has the equality $\delta_t(G, (F_m)_*^e G) = \Omega(m^{de})$ for every split generator $G$ of $\db(R)$.
In particular, there is an inequality $\h_t((F_m)_*) \ge d\log m$.
\end{enumerate}
\end{cor}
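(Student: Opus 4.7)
The plan is to verify the hypothesis $\fm^{\nu u^e}\subseteq \phi^e(\fm)R\subseteq \fm^{u^e}$ of Theorem \ref{lbd} (with $\nu=\edim R$) in each case, then invoke its second assertion to obtain the $\Omega$-estimate and upgrade it to the categorical entropy bound via Proposition \ref{cor2}(1)(b).

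For (1), I would fix a minimal generating system $x_1,\ldots,x_\nu$ of $\fm$ and take $u=p$. Using the Frobenius identity $(a+b)^{p^e}=a^{p^e}+b^{p^e}$ in characteristic $p$, any element $y=\sum r_i x_i\in\fm$ satisfies $F^e(y)=\sum r_i^{p^e}x_i^{p^e}$, so $F^e(\fm)R=(x_1^{p^e},\ldots,x_\nu^{p^e})R$. The inclusion $F^e(\fm)R\subseteq \fm^{p^e}$ is immediate, while a pigeonhole argument shows that any product of $\nu p^e$ elements chosen from $\{x_1,\ldots,x_\nu\}$ contains some $x_i$ at least $p^e$ times, which gives $\fm^{\nu p^e}\subseteq F^e(\fm)R$. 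The second assertion of Theorem \ref{lbd} then yields $\delta_t(G,{}^eG)=\Omega(([{}^1k:k]p^d)^e)$, and Proposition \ref{cor2}(1)(b) upgrades this to $\h_t(F_*)\ge d\log p+\log[{}^1k:k]$.

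For (2), since the monomials $\{t^{a_j}\}$ span $\fm/\fm^2$ over $k$, one may choose a subset $\{t^{b_1},\ldots,t^{b_\nu}\}$ forming a minimal generating set of $\fm$. The ring homomorphism $F_m^e$ sends $t^{b_j}$ to $t^{m^e b_j}=(t^{b_j})^{m^e}$, so $F_m^e(\fm)R=((t^{b_1})^{m^e},\ldots,(t^{b_\nu})^{m^e})R$, and the same pigeonhole argument with $u=m$ verifies $\fm^{\nu m^e}\subseteq F_m^e(\fm)R\subseteq \fm^{m^e}$. Moreover, $F_m$ is $k$-linear by construction, so it restricts to the identity on the residue field, whence $[{}^1k:k]=1$. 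Applying Theorem \ref{lbd} followed by Proposition \ref{cor2}(1)(b) then yields $\delta_t(G,(F_m)_*^eG)=\Omega(m^{de})$ and $\h_t((F_m)_*)\ge d\log m$.

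No substantial obstacle arises here; both parts are direct specializations of Theorem \ref{lbd}. The only slightly delicate point is the choice of a \emph{monomial} minimal generating set of $\fm$ in (2), so that $F_m^e$ literally raises each generator to the $m^e$-th power and the pigeonhole step goes through exactly as in (1).
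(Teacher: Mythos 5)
Your proof is correct and follows exactly the approach the paper intends: the corollary is a direct specialization of the second assertion of Theorem \ref{lbd}, and your work consists in verifying the hypothesis $\fm^{\nu u^e}\subseteq\phi^e(\fm)R\subseteq\fm^{u^e}$ for $\phi=F$, $u=p$ and $\phi=F_m$, $u=m$, which the paper asserts without detail. Your pigeonhole check and the observation that $F_m$ is $k$-linear (so that $[{}^1k:k]=1$ in part (2)) are both accurate, and the passage from the $\Omega$-estimate to the entropy lower bound via Proposition \ref{cor2}(1)(b) is the intended route.
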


Next, we generalize Corollary \ref{corlbd}(1) to the global case.
To this end, we need a couple of lemmas.
For a prime ideal $\fp$ of a ring $R$, set $\alpha_\fp = \log_p [{}^1k(\fp): k(\fp)]$.	

\begin{lem}\label{glem1}
Let $R$ be an $F$-finite ring.
Let $\fp\subseteq\fq$ be in $\Spec R$.
Then $\alpha_\fp + \height \fp = \alpha_\fq + \height \fq$.
\end{lem}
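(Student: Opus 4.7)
The plan is to invoke Kunz's classical rank formula for the Frobenius pushforward: for an $F$-finite local domain $(S,\fn,\ell)$ of Krull dimension $e$, the finitely generated $S$-module $F_*S$ has generic rank $p^{e}\cdot[\ell^{1/p}:\ell]$. This is the key fact tying together the dimension data, the residue field extension degree, and the Frobenius.

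First I would localize at $\fq$ to reduce to the case where $(R,\fq)$ is $F$-finite local; this preserves the residue fields $k(\fp), k(\fq)$ and the heights $\height\fp, \height\fq$. Then I would pass to the quotient $R/\fp$, which, as a prime quotient of an $F$-finite local ring, is itself $F$-finite and is a local domain with maximal ideal $\fq/\fp$, residue field $k(\fq)$, fraction field $k(\fp)$, and Krull dimension $d':=\dim R/\fp$.

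The heart of the argument is to compute the generic rank of the $R/\fp$-module $F_*(R/\fp)$ in two ways. On one hand, Kunz's formula applied to $R/\fp$ gives the rank as $p^{d'}\cdot[k(\fq)^{1/p}:k(\fq)] = p^{d'+\alpha_\fq}$. On the other hand, localizing $F_*(R/\fp)$ at the zero ideal of the domain $R/\fp$ produces $F_*(k(\fp))$, whose $k(\fp)$-dimension is $[k(\fp)^{1/p}:k(\fp)] = p^{\alpha_\fp}$. Equating these two expressions yields the core identity $\alpha_\fp-\alpha_\fq=\dim R/\fp$.

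To conclude, I would invoke the dimension identity $\height\fp+\dim R/\fp=\dim R=\height\fq$ in $(R,\fq)$ to rewrite $\dim R/\fp$ as $\height\fq-\height\fp$. The main subtlety is precisely this final step: the identity $\height\fp+\dim R/\fp=\dim R$ requires the local ring $R$ to be formally equidimensional in addition to being universally catenary. Since $F$-finite rings are excellent by Kunz's theorem (hence universally catenary), the catenary half is automatic, but the equidimensional property must be extracted from the context of the lemma (for instance, by restricting attention to an appropriate equidimensional component, or by using the global structural hypothesis available in the application that follows).
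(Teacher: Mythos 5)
Your argument follows essentially the same route as the paper's: both hinge on Kunz's comparison $[k(\fp)^{1/p}:k(\fp)]=p^{\dim R_\fq/\fp R_\fq}\,[k(\fq)^{1/p}:k(\fq)]$ and then convert $\dim R_\fq/\fp R_\fq$ into $\height\fq-\height\fp$. The paper cites \cite[Proposition 2.3]{Kunz} directly for the first step; you re-derive it by computing the generic rank of $F_*(R/\fp)$ over the local domain $R/\fp$ in two ways, which is a correct unpacking of the same fact.

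The subtlety you flag at the end is a genuine gap, and it is present in the paper's own proof as well. Catenarity of $R_\fq$ ensures that all saturated chains from $\fp$ to $\fq$ have the same length $\dim R_\fq/\fp R_\fq$, but it does not give $\height\fp+\dim R_\fq/\fp R_\fq=\height\fq$ unless $R_\fq$ is equidimensional. Concretely, let $k$ be a perfect field of characteristic $p$ and let $R$ be the localization of $k[x,y,z]/(xz,yz)$ at $(x,y,z)$, an $F$-finite (hence excellent, hence catenary) local ring with $\dim R=2$. Take $\fp=(x,y)$ and $\fq=(x,y,z)$. Then $\height\fp=0$ and $\height\fq=2$, while $k(\fp)\cong k(z)$ gives $\alpha_\fp=1$ and $k(\fq)=k$ gives $\alpha_\fq=0$, so $\alpha_\fp+\height\fp=1\neq 2=\alpha_\fq+\height\fq$. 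Thus the lemma as stated is false, and your instinct to look for an equidimensionality hypothesis is correct; it cannot be ``extracted from the context,'' because the global application (Corollary~\ref{lbd-sch}) only assumes connectedness, which the example above satisfies. Both the lemma and its downstream use need an explicit equidimensionality hypothesis (e.g.\ $R$ a domain, or all local rings $R_\fq$ equidimensional).
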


\begin{proof}
It follows from \cite[Proposition 2.3]{Kunz} that $[{}^1k(\fp):k(\fp)] = [{}^1k(\fq):k(\fq)] \cdot p^{\dim R_\fq/\fp R_\fq}$, where $p=\chara R$.
The ring $R$ is excellent (see Remark \ref{2}(2)), and in particular, it is catenary.
Therefore, we have the equalities $\alpha_\fp + \height \fp = \alpha_\fq + \dim R_\fq/\fp R_\fq + \height \fp = \alpha_\fq + \height \fq$.
\end{proof}

Modifying the definition of a Hochster--Huneke graph \cite{HH}, we introduce the following graph $\rG(R)$ associated to a noetherian ring $R$.
\begin{itemize}
\item
The set of vertices is $\Min R$, the set of minimal prime ideals of $R$.
\item
There is an edge between two prime ideals $\fp$ and $\fq$ if $\fp + \fq \neq R$. 	
\end{itemize}

\begin{lem}[cf. {\cite[Theorem 3.6]{HH}}]\label{glem2}
Let $R$ be a ring.
If $\Spec R$ is connected as a topological space, then $\rG(R)$ is connected as a graph.	
\end{lem}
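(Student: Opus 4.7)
The plan is to argue by contrapositive: assuming $\rG(R)$ is disconnected as a graph, I will construct a nontrivial disconnection of $\Spec R$ as a topological space. This follows the standard template for Hochster--Huneke-type results.

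First, I would partition the (finite, since $R$ is noetherian) set $\Min R = A \sqcup B$ into two nonempty subsets with no edge between them, so that $\fp + \fq = R$ for every $\fp \in A$ and $\fq \in B$. Define the ideals
$$
I = \bigcap_{\fp \in A}\fp \qquad \text{and} \qquad J = \bigcap_{\fq \in B}\fq.
$$
Then $I \cap J = \bigcap_{\fr \in \Min R}\fr = \sqrt{0}$, so $V(I) \cup V(J) = V(I \cap J) = V(\sqrt{0}) = \Spec R$, and each of $V(I)$, $V(J)$ is nonempty since it contains the corresponding vertices of $A$ or $B$.

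The heart of the proof — and the step that uses the hypothesis on $\rG(R)$ — is showing that $I + J = R$, which will give $V(I) \cap V(J) = V(I+J) = \emptyset$ and hence yield the desired disconnection $\Spec R = V(I) \sqcup V(J)$. To prove $I + J = R$, suppose for contradiction that $I + J$ is contained in some maximal ideal $\fm$. Because $A$ is finite and $\fm$ is a prime ideal containing the finite intersection $I = \bigcap_{\fp \in A}\fp$, prime avoidance (or rather the defining property of primes applied to a finite intersection) gives $\fp_0 \subseteq \fm$ for some $\fp_0 \in A$; analogously $\fq_0 \subseteq \fm$ for some $\fq_0 \in B$. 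Then $\fp_0 + \fq_0 \subseteq \fm \neq R$, so $\rG(R)$ contains an edge between $\fp_0 \in A$ and $\fq_0 \in B$, contradicting the choice of the partition.

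The main (essentially only) obstacle is ensuring that the prime $\fm$ actually captures a minimal prime from each side, which is exactly where the finiteness of $\Min R$ is used; without noetherianness this standard argument can break down. Once $I+J = R$ is established, the disconnection of $\Spec R$ follows formally, completing the contrapositive.
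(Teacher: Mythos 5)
Your proof is correct and follows essentially the same contrapositive argument as the paper: partition $\Min R = A \sqcup B$, form $I = \bigcap_{\fp\in A}\fp$ and $J = \bigcap_{\fq\in B}\fq$, and exhibit $\Spec R = \rV(I)\sqcup\rV(J)$. The only cosmetic difference is how disjointness is seen: you prove $I+J=R$ via a maximal ideal and the prime-containing-a-finite-intersection argument, while the paper directly expands $\rV(I)\cap\rV(J)=\bigcup_{\fp\in A,\,\fq\in B}\rV(\fp+\fq)=\emptyset$; the two are equivalent.
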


\begin{proof}
Assume that $\rG(R)$ is not connected as a graph. 
Then there exists a nontrivial partition $\Min R = A \sqcup B$ such that $\fp + \fq = R$ for all $\fp \in A$ and $\fq \in B$.
Neither the ideal $I = \bigcap_{\fp \in A} \fp$ nor the ideal $J = \bigcap_{\fq \in B} \fq$ is nilpotent, while the ideal $IJ$ is nilpotent.
We have
$$
\textstyle
\rV(I) \cap \rV(J) = (\bigcup_{\fp \in A} \rV(\fp)) \cap (\bigcup_{\fq \in B} \rV(\fq))= \bigcup_{\fp \in A, \fq \in B} \rV(\fp + \fq) = \emptyset.
$$
We obtain a nontrivial decomposition $\Spec R = \rV(IJ) = \rV(I) \sqcup \rV(J)$ into disjoint closed subsets.
Therefore, $\Spec R$ is not connected as a topological space.
\end{proof}

Lemma \ref{glem2} says that for any two prime ideals $\fp, \fq$ of a ring $R$ with $\Spec R$ connected, there is a sequence of prime ideals $\fp_1=\fp, \fp_2, \ldots, \fp_n=\fq, \fq_1, \fq_2, \ldots, \fq_{n-1}$ such that $\fp_i, \fp_{i+1} \subseteq \fq_i$ for $i=1,2,\ldots,n-1$.
Lemma \ref{glem1} implies $\alpha_\fp + \height \fp = \alpha_\fq + \height \fq$ and hence the number $\alpha_\fp + \height \fp$ is constant for $\fp \in \Spec R$.
As a result, for an $F$-finite noetherian scheme $X$, the function
$$
X \to \ZZ, \,\, x \mapsto \dim \cO_{X,x} + \log_p [{}^1k(x): k(x)]
$$
is continuous. 
If $X$ is connected, then this is a constant function, namely, the number
$$
\beta_X :=  \dim \cO_{X,x} + \log_p [{}^1k(x): k(x)]
$$
is independent of the choice of $x \in X$.

\begin{cor}\label{lbd-sch}
Let $X$ be a $d$-dimensional $F$-finite connected noetherian scheme of characteristic $p$.
Then the equality $\delta_t(G, {}^eG) = \Omega(p^{e\beta_X})$ holds for any split generator $G$ of $\db(\coh X)$.
In particular, the following holds for any $x \in X$.
$$
\h_t^{\db(\coh X)}(F_*) \ge \beta_X\log p = \dim \cO_{X,x} \cdot \log p + \log[{}^1k(x):k(x)].
$$
\end{cor}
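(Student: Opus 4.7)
The plan is to reduce the global statement to the local case handled by Corollary \ref{corlbd}(1) via localization at a point. Fix any $x \in X$. The stalk $\cO_{X,x}$ is an $F$-finite noetherian local ring of characteristic $p$, hence excellent by Remark \ref{2}(2), so $\db(\mod \cO_{X,x})$ admits a split generator by \cite[Theorem 4.15]{ELS}. The stalk functor $L_x : \db(\coh X) \to \db(\mod \cO_{X,x})$ is exact, and since Frobenius commutes with localization one has $L_x \circ F_* \cong F_* \circ L_x$, i.e., $L_x({}^eG) \cong {}^e L_x(G)$.

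The key step is the following claim: if $G$ is a split generator of $\db(\coh X)$, then $L_x(G)$ is a split generator of $\db(\mod \cO_{X,x})$. Since $\db(\mod \cO_{X,x})$ is generated as a thick subcategory by its finitely generated modules placed in a single degree, it suffices to verify that every such $M$ lies in $\thick L_x(G)$. Picking an affine open $U = \Spec A$ containing $x$, corresponding to a prime $\fp$, and clearing denominators in a finite presentation of $M$ over $A_\fp$ yields a finitely generated $A$-module $N$ with $N_\fp \cong M$. The associated coherent sheaf on $U$ extends to a coherent sheaf $\mathcal{F}$ on all of $X$ because $X$ is noetherian (the extension of coherent sheaves along open immersions in a noetherian scheme is standard). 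Then $\mathcal{F} \in \thick G$, and applying the exact functor $L_x$ gives $M \cong \mathcal{F}_x \in \thick L_x(G)$.

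Once the claim is established, Lemma \ref{compl}(6) gives
$$
\delta_t^{\db(\mod \cO_{X,x})}(L_x(G), {}^e L_x(G)) \le \delta_t^{\db(\coh X)}(G, {}^e G),
$$
and Corollary \ref{corlbd}(1) applied to $\cO_{X,x}$ bounds the left-hand side below by $\Omega\bigl(([{}^1k(x){:}k(x)]\,p^{\dim \cO_{X,x}})^e\bigr) = \Omega(p^{e\beta_X})$, by the very definition of $\beta_X$. This yields the asserted lower bound on $\delta_t^{\db(\coh X)}(G, {}^e G)$, and the entropy inequality then follows immediately from Proposition \ref{cor2}(1b).

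The main technical point I expect to need care is verifying that $L_x(G)$ split generates $\db(\mod \cO_{X,x})$; everything boils down to the standard fact that coherent sheaves on an open subscheme of a noetherian scheme extend globally. Once that is granted, the rest consists of assembling previously proven results and the identity $p^{e\beta_X} = ([{}^1k(x){:}k(x)]\,p^{\dim \cO_{X,x}})^e$.
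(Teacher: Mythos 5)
Your proof is correct and follows the same overall strategy as the paper: reduce to the local case at a point $x$, show the stalk $G_x$ is a split generator of $\db(\cO_{X,x})$, use functoriality of $\delta_t$ under the exact stalk functor, and then invoke Corollary~\ref{corlbd}(1). The one place you diverge is in the key claim that $G_x$ split-generates $\db(\cO_{X,x})$. The paper obtains this from the Verdier quotient equivalence $\db(\coh X)/\cS(x) \simeq \db(\cO_{X,x})$ (citing Orlov and Matsui); since the quotient functor is exact and essentially surjective, the image of a split generator is automatically a split generator. You instead give a direct, elementary argument: every bounded complex over $\cO_{X,x}$ is built by truncation from its homology modules, and every finitely generated $\cO_{X,x}$-module descends (via clearing denominators) to a coherent sheaf on an open affine and hence extends to a coherent sheaf $\mathcal{F}$ on all of $X$, so $M \cong \mathcal{F}_x \in \thick G_x$. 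This avoids the reference to the localization theorem at the cost of inlining the standard extension-of-coherent-sheaves fact; it is a perfectly valid alternative and arguably more self-contained. The rest of the argument — the commutation of Frobenius pushforward with the stalk functor, Lemma~\ref{compl}(6), Corollary~\ref{corlbd}(1), and the identity $p^{e\beta_X} = \bigl([{}^1k(x){:}k(x)]\,p^{\dim\cO_{X,x}}\bigr)^e$ — matches the paper exactly.
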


\begin{proof}
Since $X$ is $F$-finite, it is excellent by Remark \ref{2}(2).
It follows from \cite[Theorem 4.15]{ELS} that there is a split generator $G$ of $\db(\coh X)$.
Let $x$ be any point of $X$. 
Then $G_x$ is a split generator of $\db( \cO_{X,x})$. 
Indeed, there is an equivalence of triangulated categories
$$
\db(\coh X)/\cS(x) \cong \db( \cO_{X,x}),\,\, E \mapsto E_x,
$$
where $\cS(x) := \{E \in \db(\coh X) \mid E_x \cong 0 \}$; see \cite[Lemma 2.2]{Orl} and \cite[Lemma 3.2]{Mat}.
Note that there is a commutative diagram 
$$
\xymatrix{
\db(\coh X) \ar[r]^{F_*} \ar[d]_{(-)_x} & \db(\coh X) \ar[d]^{(-)_x}  \\
\db( \cO_{X,x}) \ar[r]_{F_*} & \db( \cO_{X,x}).
}
$$
Here, the vertical functors are the stalk functors and the horizontal ones are the Frobenius pushforwards.
Using \cite[Proposition 2.2(c)]{DHKK}, we get $\delta_t^{\db( \cO_{X,x})}(G_x, {}^eG_x) \le \delta_t^{\db(\coh X)}(G, {}^eG)$.
Corollary \ref{corlbd}(1) implies $\delta_t^{\db( \cO_{X,x})}(G_x, {}^eG_x) = \Omega(([{}^1k(x):k(x)]d^{\dim \cO_{X,x}})^e) = \Omega(p^{e \beta_X})$.
\end{proof}

\begin{rem}
If $X$ is a $d$-dimensional $F$-finite algebraic variety over an algbraically closed  field $k$, then $\beta_X = d + \log_p [{}^1k:k]$.	
Hence the inequality $\h_t(F_*) \ge d \log p + \log[{}^1k:k]$ holds.
\end{rem}

%%%%%%%%%%%%%%%%%%%%%%%%%%%%%%%%%%%%%%%%%
\section{Upper bounds}

In this section, we give an upper bound of the categorical entropy of the Frobenius pushforward and complete the proof of Theorem \ref{main}.
We begin with providing an easy lemma.

\begin{lem}\label{ext}
Let $R$ be a local ring, and let $x$ be an $R$-regular element. 
Then, for all positive integers $n$, one has that $R/x^nR \in (R/xR)^{* n}$. 	
\end{lem}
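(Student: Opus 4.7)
The plan is to proceed by induction on $n$. The case $n=1$ is immediate, since by definition $(R/xR)^{*1}=\{R/xR\}$ contains $R/xR$.

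For the inductive step, the key observation is that multiplication by $x$ produces a short exact sequence
$$
0 \longrightarrow R/x^{n-1}R \xrightarrow{\,\,x\,\,} R/x^nR \longrightarrow R/xR \longrightarrow 0.
$$
The surjection on the right is the canonical projection (whose kernel is $xR/x^nR$), and the identification $xR/x^nR \cong R/x^{n-1}R$ via the map $r\mapsto xr$ uses crucially that $x$ is $R$-regular: if $xr\in x^nR$, then $xr=x^ns$ for some $s$, whence $r=x^{n-1}s$ by cancelling $x$, so $r\in x^{n-1}R$. Thus multiplication by $x$ from $R/x^{n-1}R$ into $R/x^nR$ is injective, and its cokernel is $R/xR$.

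This short exact sequence yields an exact triangle $R/x^{n-1}R \to R/x^nR \to R/xR \to R/x^{n-1}R[1]$ in $\db(R)$, which by definition means $R/x^nR \in R/x^{n-1}R * R/xR$. By the induction hypothesis $R/x^{n-1}R \in (R/xR)^{*(n-1)}$, so by the associativity of $*$ noted earlier in the paper,
$$
R/x^nR \in (R/xR)^{*(n-1)} * (R/xR) = (R/xR)^{*n},
$$
completing the induction.

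There is no genuine obstacle here; the only point requiring any care is confirming that $x$ is regular also on $R/x^{n-1}R$ in the relevant sense (i.e.\ that the induced multiplication map $R/x^{n-1}R \to R/x^nR$ is injective), which follows immediately from the regularity of $x$ on $R$ as above.
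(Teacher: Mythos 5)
Your proof is correct and takes essentially the same inductive route as the paper. The only (cosmetic) difference is the choice of short exact sequence: the paper uses $0 \to R/xR \xrightarrow{x^{n-1}} R/x^nR \to R/x^{n-1}R \to 0$ and then invokes associativity of $*$ to rewrite $R/xR * (R/xR)^{*(n-1)}$ as $(R/xR)^{*n}$, whereas your sequence $0 \to R/x^{n-1}R \xrightarrow{x} R/x^nR \to R/xR \to 0$ gives $R/x^nR \in (R/xR)^{*(n-1)} * R/xR$, which matches the inductive definition of $(R/xR)^{*n}$ on the nose.
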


\begin{proof}
We use induction on $n$.
The case $n=1$ is clear.
Let $n\ge2$.
As $x$ is $R$-regular, there is an exact sequence $0 \to R/xR \to R/x^nR \to R/x^{n-1}R \to 0$.
Using the induction hypothesis, we get $R/x^nR \in R/xR * R/x^{n-1}R \subseteq R/xR * (R/xR)^{*(n-1)} = (R/xR)^{* n}$. 
\end{proof}

Let $R$ be a local ring.
Let $M$ be a finitely generated module $M$.
Then we denote by $\mu_R(M)$ the minimal number of generators of $M$.
For an integer $n\ge0$, we denote by $\Omega_R^nM$ the $n$th syzygy of $M$ in the minimal free resolution of $M$ and by $\beta_n^R(M)$ the $n$th Betti number of $M$.
Now we are ready to give a proof of the main result of this section.

\begin{thm}\label{ubd}
Let $(R,\fm,k)$ be a $d$-dimensional $F$-finite local ring of characteristic $p$.
Then:
\begin{enumerate}[\rm(1)]
\item
There is an equality $\delta_t(G, {}^eG) = O(([{}^1k:k]p^d)^e)$ for any split generator $G$ of $\db(R)$.
\item
There is an inequality $\h_t(F_*) \le d \log p + \log[{}^1k:k]$.
\end{enumerate}
\end{thm}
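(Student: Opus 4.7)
Part (2) follows from part (1) via Proposition \ref{cor2}(1)(a) with $u = [{}^1k:k]p^d$, so I focus on proving (1). By Remark \ref{rem}(2), the bound $\delta_t(G, {}^eG) = O(([{}^1k:k]p^d)^e)$ depends on the split generator $G$ only up to multiplicative constants, so it suffices to verify it for one convenient split generator $G_0$; I take $G_0$ to contain $R$, $k$, and the Koszul complex $\rK := \rK(\ul{x})$ on a fixed system of parameters $\ul{x} = x_1, \ldots, x_d$ of $R$ as direct summands (enlarging an arbitrary split generator if necessary).

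The key technical step would be a Koszul-level strengthening of Lemma \ref{ext}: for every $x \in R$ and $n \ge 1$, $\rK(x^n) \in \rK(x)^{*n}$ in $\db(R)$, with no regularity hypothesis on $x$. I would prove this by induction on $n$ using the chain map $\rK(x) \to \rK(x^n)$ whose components in degrees $-1$ and $0$ are $1$ and $x^{n-1}$, respectively. A direct computation shows that its mapping cone is quasi-isomorphic to $\rK(x^{n-1})$, yielding the exact triangle $\rK(x) \to \rK(x^n) \to \rK(x^{n-1}) \to \rK(x)[1]$. Tensoring this over the $d$ components of $\ul{x}$ (each $\rK(x_i)$ is perfect, so $\rK(x_i) \otimes_R -$ is exact) then gives $\rK(\ul{x}^{[p^e]}) \in \rK^{*p^{de}}$ in $\db(R)$. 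Applying the exact functor $F_*^e$ together with the projection formula ${}^eR \ltensor_R \rK \simeq F_*^e F^{*e}\rK = {}^e\rK(\ul{x}^{[p^e]})$ yields
$$
{}^eR \ltensor_R \rK \in ({}^e\rK)^{*p^{de}}.
$$
Since ${}^e\rK$ lies in $\dbm(R)$ with total cohomology length $O([{}^1k:k]^e)$ by Lemma \ref{len} (applied to the finite-length Koszul homologies of $\ul{x}$ on $R$, whose lengths are bounded independently of $e$), and since $k$ is a summand of $G_0$, combining with the $\dbm(R)$-entropy calculation from Section 2 gives $\delta_t(G_0, {}^eR \ltensor_R \rK) = O(([{}^1k:k]p^d)^e)$.

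The hardest part of the argument — and my main obstacle — is upgrading this to a bound on $\delta_t(G_0, {}^eR)$ itself (and hence on $\delta_t(G_0, {}^eG_0)$), since tensoring with the Koszul complex is in general a lossy operation that cannot be directly inverted. My plan for this step is an induction on the Krull dimension $d$: the base case $d = 0$ is immediate from the $\dbm(R)$-entropy calculation. For the inductive step, I would pick an $R$-regular element $x_1 \in \ul{x}$ (available after a standard reduction, e.g.\ passing to the completion) and apply $F_*^e$ to the short exact sequence $0 \to R \xrightarrow{x_1} R \to R/x_1R \to 0$; combined with Lemma \ref{ext} used to iterate the $x_1^{p^e}$-action on ${}^eR$, this should reduce the bound at dimension $d$ to a bound on $\db(R/x_1R)$ at dimension $d-1$, picking up exactly the expected factor $p$ at each step while the factor $[{}^1k:k]^e$ propagates from the Frobenius action on the residue field as already established.
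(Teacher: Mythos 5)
Your Koszul computation (through $\delta_t(G_0, {}^eR \ltensor_R \rK) = O(([{}^1k:k]p^d)^e)$) is correct as far as it goes, but it is a dead end, as you observe: tensoring with $\rK$ projects into $\dbm(R)$ and discards exactly the data one needs. Indeed, Lemma~\ref{lem1} shows that Koszul homology lengths give a \emph{lower} bound on $\delta_t(G,-)$; there is no reverse implication, and the paper's upper-bound proof does not involve the Koszul complex at all. The genuine gap is in your final paragraph, which is where all the work has to happen, and the sketch there cannot be made to close for three reasons. (a) You cannot count on an $R$-regular element of $\fm$ when $\depth R = 0$, and completion does not create one; the paper first reduces to $R$ a domain via a prime filtration $0 = I_0 \subseteq \cdots \subseteq I_r = R$ with $I_i/I_{i-1} \cong R/\fp_i$ and Lemma~\ref{compl}(3)(5)(6). (b) The short exact sequence obtained by applying $F_*^e$ to $0 \to R \xrightarrow{x_1} R \to R/x_1R \to 0$ has ${}^eR$ on \emph{both} ends, so it does not decompose ${}^eR$ into pieces from $\db(R/x_1R)$ and free modules, and hence does not transfer an upper bound on $\delta_t$ across the dimension drop.

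What the paper does instead is pick a nonzero $x$ that uniformly annihilates $\Ext_R^{2d+1}(-,-)$ on $\mod R$ (Iyengar--Takahashi \cite{IT}), so that by Herzog--Popescu \cite{HP} the module ${}^eR$ is a \emph{direct summand} of $\Omega_R\bigl(\Omega_R^{2d}({}^eR)/x\Omega_R^{2d}({}^eR)\bigr)$; unwinding a truncated free resolution then yields a containment of ${}^eR \oplus M$ in a $*$-product of $2d+2$ terms, namely $({}^e\!R/x\,{}^e\!R)[-2d+1]$, shifts of $(R/xR)^{\oplus \beta_i^R({}^eR)}$ for $0 \le i \le 2d-1$, and $R^{\oplus \beta_{2d}^R({}^eR)}$. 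Crucially, the number of $*$-factors is bounded independently of $e$. The ``factor of $p$'' per dimension step does not come from cutting by a regular element: it comes from Lemma~\ref{ext} applied to ${}^e\!R/x\,{}^e\!R = {}^e(R/x^{p^e}R) \in ({}^e(R/xR))^{*p^e}$ (giving a $p^e$), combined with Seibert's theorem \cite{Sei} that $\beta_i^R({}^eR) = O(([{}^1k:k]p^d)^e)$ (controlling the free and $R/xR$-free multiplicities), plus the inductive hypothesis in $\db(R/xR)$. Your plan is missing the uniform $\Ext$-annihilator, the Herzog--Popescu splitting, and the Betti-number asymptotics, and without all three the induction on $d$ cannot be completed.
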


\begin{proof}
It is enough to prove (1) because it implies (2).
By Remark \ref{rem}(2), we have only to show that there exists a split generator $G$ such that $\delta_t(G, {}^eG) = O(([{}^1k:k]p^d)^e)$.
Let us show it by induction on $d$.
Assume $d = 0$.
Then $k$ is a split generator of $\db( R)$.
Since ${}^ek \cong k^{\oplus [{}^1k:k]^e}$, we have $\delta_t(k, {}^ek) \le [{}^1k:k]^e =  p^{e\alpha}$, and this implies $\delta_t(k, {}^ek) = O([{}^1k:k]^e)$.

Now suppose that $d > 0$ and that the equality $\delta_t(G, {}^eG) = O(([{}^1k:k]p^{d'})^e)$ holds for any $F$-finite local ring $R$ of dimension $d' <d$ and some split generator $G$ of $\db( R)$.
Let $R$ be a $d$-dimensional $F$-finite local ring.
Take a filtration $0 = I_0 \subseteq I_1 \subseteq \cdots \subseteq I_r = R$ of ideals with $I_i/I_{i-1} \cong R/\fp_i$ for some $\fp_i \in \Spec R$.
For a complex $X \in \db( R)$, there is a filtration $0 = I_0X \subseteq I_1X \subseteq \cdots \subseteq I_rX =X$ of complexes such that $I_iX/I_{i-1}X$ is a bounded complex of finitely generated $R/\fp_i$-modules. 
If each $\db(R/\fp_i)$ has a split generator $G_i$, then the above filtration shows that $\bigoplus_{i=1}^r G_i$ is a split generator of $\db(R)$.
Moreover, one has the inequalities
\begin{align*}
\textstyle\delta_t^{\db(R)}(\bigoplus_{i=1}^r G_i, {}^e(\bigoplus_{i=1}^r G_i))  
&\textstyle\le \sum_{i=1}^r \delta_t^{\db(R)}(\bigoplus_{i=1}^r G_i, {}^e G_i) \\
&\textstyle\le \sum_{i=1}^r \delta_t^{\db(R)}(G_i, {}^e G_i)
\le \sum_{i=1}^r \delta_t^{\db(R/\fp_r)}(G_i, {}^e G_i),
\end{align*}
where the first, second, and third inequalities follow from Lemma \ref{compl}(5), (3) and (6), respectively.
Thus, it suffices to show the statement in the case where $R$ is an integral domain.

By virtue of \cite[Theorem 5.3]{IT}, there exists a nonzero element $x$ of $R$ such that $x\Ext_R^{2d+1}(M,N)= 0$ for all $R$-modules $M$ and $N$.
Let $G$ be a split generator of $\db( R/xR)$ of which $R/xR$ is a direct summand.
Then $\widetilde{G} := G \oplus R$ is a split generator of $\db( R)$.
Indeed, let $M$ be a finitely generated $R$-module $M$.
As $x$ kills $\Ext_R^1(\Omega_R^{2d}M, -) \cong \Ext_R^{2d+1}(M,-)$, the module $M$ is a direct summand of $\Omega_R(\Omega^{2d}_R M/x\Omega^{2d}_R M)$ by \cite[Lemma 2.2]{HP}.
Then
$$
\Omega^{2d}_R M/x\Omega^{2d}_R M \in \db( R/xR) = \thick_{\db( R/xR)}(G) \subseteq \thick_{\db( R)}(\widetilde{G}).
$$
There is an exact sequence $0 \to \Omega_R(\Omega^{2d}_R M/x\Omega^{2d}_R M) \to P \to \Omega^{2d}_R M/x\Omega^{2d}_R M  \to 0$ with $P$ free, and we see that $M \in \thick_{\db( R)}(\widetilde{G})$.
It follows that $\widetilde{G}$ is a split generator of $\db(R)$.

Using Lemma \ref{compl}(2), we have an inequality $\delta_t^{\db( R)}(\widetilde{G}, {}^e\widetilde{G}) \le \delta_t^{\db( R)}(\widetilde{G}, {}^eG) + \delta_t^{\db( R)}(\widetilde{G}, {}^e\!R)$.
Also, by Lemma \ref{compl}(3)(6) and the induction hypothesis, we get
$$
\delta_t^{\db( R)}(\widetilde{G}, {}^eG) \le \delta_t^{\db( R)}(G, {}^eG) \le \delta_t^{\db( R/xR)}(G, {}^eG) = O(([{}^1k:k]p^{e-1})^e).
$$
Therefore, it is enough to prove that $\delta_t^{\db( R)}(\widetilde{G}, {}^e\!R) = O(([{}^1k:k]p^d)^e)$.

The minimal free resolution of the $R$-module ${}^e \!R$ gives rise to an exact sequence 
$$
0 \to \Omega_R^{2d}({}^e\!R) \to R^{\oplus \beta_{2d-1}^R({}^e\!R)} \to \cdots \to R^{\oplus \beta_{0}^R({}^e\!R)} \to {}^e\!R \to 0.
$$
As $x$ is regular on $R$ and ${}^eR$, we obtain an exact sequence
$$
0 \to \Omega_R^{2d}({}^e\!R)/x\Omega_R^{2d}({}^e\!R) \to (R/xR)^{\oplus \beta_{2d-1}^R({}^e\!R)} \to \cdots \to (R/xR)^{\oplus \beta_{0}^R({}^e\!R)} \to {}^e\!R/x\,{}^e\!R \to 0.
$$
Also, there is a canonical short exact sequence
$$
0 \to \Omega_R(\Omega^{2d}_R ({}^e\!R)/x\Omega^{2d}_R ({}^e\!R)) \to R^{\oplus \beta_{2d}^R({}^e\!R)} \to \Omega_R^{2d}({}^e\!R)/x\Omega_R^{2d}({}^e\!R) \to 0.
$$
Here, we use the equalities $\mu_R(\Omega^{2d}_R ({}^e\!R)/x\Omega^{2d}_R ({}^e\!R))) = \mu_R(\Omega^{2d}_R ({}^e\!R)) = \beta_{2d}^R({}^e\!R)$.
As we have already seen, ${}^eR$ is a direct summand of $\Omega_R(\Omega^{2d}_R ({}^e\!R)/x\Omega^{2d}_R ({}^e\!R))$.
Hence there exists a finitely generated $R$-module $M$ such that the following containment holds true.
$$
{}^e\!R \oplus M \in ({}^e\!R/x\,{}^eR)[-2d+1] * (R/xR)^{\oplus \beta_{0}^R({}^e\!R)}[-2d] * \cdots * (R/xR)^{\oplus \beta_{2d-1}^R({}^e\!R)}[-1] * R^{\oplus \beta_{2d}^R({}^e\!R)}.
$$
Since $\widetilde{G}$ contains $R$ and $R/xR$ as direct summands, this yields
\begin{align*}
\delta_t^{\db( R)}(\widetilde{G}, {}^e\!R)
&\textstyle\le \delta_t^{\db( R)}(\widetilde{G}, {}^e\!R/x\,{}^e\!R)\e^{(-2d+1)t} + \sum_{i=0}^{2d} \delta_t^{\db( R)}(\widetilde{G}, (R/xR)^{\oplus \beta_{i}^R({}^e\!R)})\e^{(-2d+i)t}\\
&\textstyle\le \delta_t^{\db( R)}(\widetilde{G}, {}^e\!R/x\,{}^e\!R)\e^{(-2d+1)t} + \sum_{i=0}^{2d} \beta_{i}^R({}^e\!R)\e^{(-2d+i)t}
\end{align*}
It follows from Lemma \ref{ext} that ${}^e\!R/x\,{}^e\!R = {}^e(R/x^{p^e}R)\in{}^e((R/xR)^{* p^e}) \subseteq ({}^e(R/xR))^{* p^e}$.
Hence,
\begin{align*}
\delta_t^{\db( R)}(\widetilde{G}, {}^e\!R/x\,{}^e\!R)
&\le p^e\delta_t^{\db( R)}(\widetilde{G}, {}^e(R/xR)) 
\le p^e\delta_t^{\db( R/xR)}(G, {}^e(R/xR))\\
&\le p^e \delta_t^{\db( R/xR)}(G, {}^eG)
= p^e O(([{}^1k:k]p^{d-1})^e)
= O(([{}^1k:k]p^d)^e).
\end{align*}
Here, the first inequality follows by Lemma \ref{compl}(5), the second by Lemma \ref{compl}(3)(6), the third by Lemma \ref{compl}(2), and the last by the induction hypothesis.
On the other hand, it is shown by \cite[Theorem]{Sei} that $\beta_{i}^R({}^eR) = O(([{}^1k:k]p^d)^e)$.
Consequently, we obtain 
\begin{align*}
\delta_t^{\db( R)}(\widetilde{G}, {}^e\widetilde{G}) =O(([{}^1k:k]p^{d-1})^e) + O(([{}^1k:k]p^d)^e) = O(([{}^1k:k]p^d)^e).
\end{align*}
The proof of the theorem is now completed.
\end{proof}

The combination of Corollary \ref{corlbd}(1) with Theorem \ref{ubd} yields the following result.

\begin{cor}\label{3}
Let $R$ be a $d$-dimensional $F$-finite local ring with characteristic $p$.
Then for any split generator $G$ of $\db( R)$, one has
$
\delta_t(G, {}^eG) = \Theta(([{}^1k:k]p^d)^e).
$
In particular, the equality $\h_t(F_*) = d \log p + \log[{}^1k:k]$ holds.	
\end{cor}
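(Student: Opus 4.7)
The plan is to assemble the two halves already established in the previous sections. For the lower bound, I would invoke Corollary \ref{corlbd}(1), which gives $\delta_t(G,{}^eG) = \Omega(([{}^1k:k]p^d)^e)$ for every split generator $G$ of $\db(R)$. For the upper bound, I would invoke Theorem \ref{ubd}(1), which gives $\delta_t(G,{}^eG) = O(([{}^1k:k]p^d)^e)$. Combining the two statements immediately yields $\delta_t(G,{}^eG) = \Theta(([{}^1k:k]p^d)^e)$, which is the first assertion of the corollary.

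For the entropy formula, I would simply apply Proposition \ref{cor2}(1c) with $u = [{}^1k:k]\,p^d$ to the sequence $a_e = \delta_t(G, F_*^e G)$. Note that the required submultiplicativity $a_{m+n} \le a_m a_n$ is ensured by Lemma \ref{compl}(1) together with Lemma \ref{compl}(6): indeed,
\begin{align*}
\delta_t(G, F_*^{m+n} G)
&\le \delta_t(G, F_*^m G)\,\delta_t(F_*^m G, F_*^{m+n} G) \\
&\le \delta_t(G, F_*^m G)\,\delta_t(G, F_*^n G),
\end{align*}
where the last inequality uses that $F_*^m$ is exact. Hence Proposition \ref{cor2}(1c) gives
$$
\h_t(F_*) = \log([{}^1k:k]\,p^d) = d\log p + \log[{}^1k:k],
$$
which is the second assertion.

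There is essentially no obstacle here, since the work has been done upstream: the subtle part of the lower bound was the Koszul--length argument in Lemma \ref{lem1} and Theorem \ref{lbd}, and the subtle part of the upper bound was the induction on $\dim R$ using a nonzerodivisor killing high $\Ext$ together with Seibert's asymptotic estimate on the Betti numbers $\beta_i^R({}^eR)$ in Theorem \ref{ubd}. The corollary is merely the synthesis. The only minor bookkeeping to double-check is that the quantity $[{}^1k:k]$ appearing in both estimates is the same, and that Proposition \ref{cor2} applies uniformly in $t$ — both are immediate from the definitions.
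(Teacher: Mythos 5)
Your proposal is correct and is essentially identical to the paper's proof, which simply combines Corollary~\ref{corlbd}(1) and Theorem~\ref{ubd} and then reads off the entropy from Proposition~\ref{cor2}(1c). The extra check of submultiplicativity is fine but unnecessary: Proposition~\ref{cor2} is stated for an arbitrary exact endofunctor and split generator, with the submultiplicativity automatically guaranteed by Lemma~\ref{compl}(1)(6) as you note.
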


%%%%%%%%%%%%%%%%%%%%%%%%%%%%%%%%%%%%%%%%%%%%%%%%%%
%\begin{ac}
%\end{ac}
%%%%%%%%%%%%%%%%%%%%%%%%%%%%%%%%%%%%%%%%%%%%%%%%%%

%%%%%%%%%%%%%%%%%%%%%%%%%%%%%%%%%%%%%%%%%%%%%%%%%%%%
\end{document}